\newtheorem{theorem}{Theorem}
\newtheorem{proposition}{Proposition}
\newtheorem{remark}{Remark}
\definecolor{darkblue}{rgb}{0.0,0.0,0.0}
\def\B{{\mathbb{B}}}
\def\R{{\mathbb{R}}}
\def\Z{{\mathbb{Z}}}
\def\Cupper{{\overline C}}
\def\Clower{{\underline C}}
\def\Vupper{{\overline V}}
\def\fred{\textcolor{black}}
\begin{document}

\title{ {\bf Convex Hulls for the Unit Commitment Polytope}\\[3mm]
\author{\normalsize \bf Kai Pan and Yongpei Guan
\\ \\
{\small Department of Industrial and Systems Engineering}\\
{\small University of Florida, Gainesville, FL 32611}\\
{\small Emails: kpan@ufl.edu; guan@ise.ufl.edu}\\
} }
\date{\small January 31, 2017}

\maketitle

\vspace{-0.3in}

\begin{abstract}
\setlength{\baselineskip}{18pt}
In this paper, we consider the polyhedral structure of the unit commitment polytope. In particular, we provide the convex hull results for the problem under the following different settings: 1) the convex hulls for the integrated minimum-up/-down time and ramping polytope under the general $T$ time period setting in which the ramping rate equals to the gap between the generation upper and lower bounds and equals to half of the gap between the generation upper and lower bounds, respectively, 2) the convex hull for the integrated minimum-up/-down time and ramping-up polytope for the problem under the general $T$ time period setting, and 3) the convex hull for the integrated minimum-up/-down time and ramping-down polytope for the problem under the general $T$ time period setting.
\vspace{0.10in}

\noindent{\it Key words:} strong valid inequalities; polyhedral study; unit commitment; convex hull
\end{abstract}

\setlength{\baselineskip}{22pt}

\section{Introduction} \label{sec:introduction}
{To describe the polytope for each generator, we let} $T$ be the number of time periods {for the whole operational} horizon, $L$ ($\ell$) be the minimum-up (-down) time limit of the generator, $\Cupper$ ($\Clower$) be its generation upper (lower) bound when it is online, $\Vupper$ be its start-up/shut-down ramp rate, and $V$ be its ramp-up/-down rate in the stable generation {region}. In addition, we let $(x,y,u)$ be {the decision} variables to represent {the generator's} status, in which continuous variable $x$ represents the generation amount, binary variable $y$ represents {the generator's} online/offline status (i.e., $y_t = 1$ means the generator is online at $t$ and $y_t = 0$ otherwise), and binary variable $u$ represents whether the generator starts up or not (i.e., $u_t = 1$ means the generator starts up at $t$ and $u_t = 0$ otherwise). The corresponding integrated minimum-up/-down {time} and ramping polytope can be described as follows:
\begin{subeqnarray}
& P := \Bigl\{ (x, y, u) \in \R_+^{T} \times \B^{T} \times \B^{T-1} :  & \sum_{i = t- L +1}^{t} u_i \leq y_t, \ \forall t \in [L + 1, T]_{\Z}, \slabel{eqn:p-minup} \\
&& \sum_{i = t- \ell +1}^{t} u_i \leq 1 - y_{t- \ell}, \ \forall t \in [\ell + 1, T]_{\Z}, \slabel{eqn:p-mindn} \\
&& y_t - y_{t-1} - u_t \leq 0, \ \forall t \in [2, T]_{\Z}, \slabel{eqn:p-udef} \\
&&  - x_t + \Clower y_t \leq 0, \ \forall t \in [1, T]_{\Z}, \slabel{eqn:p-lower-bound} \\
&& x_t - \overline{C} y_t \leq 0, \ \forall i \in [1, T]_{\Z}, \slabel{eqn:p-upper-bound} \\
&& x_t - x_{t-1} \leq V y_{t-1} + \Vupper (1 - y_{t-1}), \ \forall t \in [2, T]_{\Z}, \slabel{eqn:p-ramp-up} \\
&& x_{t-1} - x_t \leq V y_t + \Vupper (1 - y_t), \ \forall i \in [2, T]_{\Z} \slabel{eqn:p-ramp-down} \Bigr\},
\end{subeqnarray}
where constraints \eqref{eqn:p-minup} and \eqref{eqn:p-mindn} describe the minimum-up and minimum-down time limits \cite{lee2004min, rajan2005minimum}, respectively (i.e., if the generator starts up at {time} $t-L+1$, it should keep online in the following $L$ consecutive time periods until {time} $t$; if the generator shuts down at time $t-\ell+1$, it should keep offline in the following $\ell$ consecutive time periods until {time} $t$), constraints \eqref{eqn:p-udef} describe the logical relationship between $y$ and $u$, constraints \eqref{eqn:p-lower-bound} and \eqref{eqn:p-upper-bound} describe the generation lower and upper bounds, and constraints \eqref{eqn:p-ramp-up} and \eqref{eqn:p-ramp-down} describe the generation ramp-up and ramp-down rate limits.
Note here that, in our polytope description, there is no start-up decision corresponding to the first-time period. In this way, the derived strong valid inequalities can be applied to each time period and can be used recursively. 
Meanwhile, considering the physical characteristics of {a thermal} generator, {without loss of generality, we can assume} $\Clower < \Vupper < \Clower + V$ and $\Cupper - \Clower - V \geq 0$. 
For notation convenience, we define $\epsilon$ as an arbitrarily small positive real number and {$[a,b]_{\Z}$} as the set of integer numbers between integers $a$ and $b$, i.e., $\{ a, a+1, \cdots, b \}$ {with} $[a,b]_{\Z} = \emptyset$ if $a > b$. Finally, we let conv($P$) represent the convex hull {description} of $P$. In addition, we consider the integrated minimum-up/-down time and ramping-up polytope as follows:
\begin{eqnarray}
P^{U} := \Bigl\{ (x, y, u) \in \R_+^{T} \times \B^{T} \times \B^{T-1} : \eqref{eqn:p-minup} - \eqref{eqn:p-ramp-up} \Bigr\}. \nonumber
\end{eqnarray}
Similarly, we consider the integrated minimum-up/-down time and ramping-down polytope as follows:
\begin{eqnarray}
P^{D} := \Bigl\{ (x, y, u) \in \R_+^{T} \times \B^{T} \times \B^{T-1} : \eqref{eqn:p-minup} - \eqref{eqn:p-upper-bound}, \eqref{eqn:p-ramp-down} \Bigr\}. \nonumber
\end{eqnarray}
Based on this description, we provide the convex hull descriptions in the following sections.

\section{Convex Hull Description conv($P$) for Two Special Cases with General $T$ Time Periods}
In this section, we first consider the case in which $V = \overline{C}-\Clower$. Under this setting, we derive the convex hull description for conv($P$). 


\begin{proposition} \label{prop:x_t-intg_k=1}
When $V = \overline{C}-\Clower$, the following inequality 
\begin{equation}
x_t \leq \Vupper y_t + (\overline{C} -\Vupper)\left(y_s-\sum_{i=0}^ju_{s-i}\right), \ \forall t \in [1, T],   \label{newineq}
\end{equation}
where $j=\min\{1, L-1, s-2, s-t\}$ and $s=\min\{t+1, T\}$, is valid and facet-defining for conv($P$). 
\end{proposition}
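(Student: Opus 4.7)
The plan has two parts. First, I would establish validity by a case analysis on the binary pattern $(y_t, y_s, u_{s-j}, \ldots, u_s)$, exploiting that under $V = \overline{C} - \Clower$ the ramping constraints \eqref{eqn:p-ramp-up}--\eqref{eqn:p-ramp-down} reduce, within any stable run, to the generation bounds \eqref{eqn:p-lower-bound}--\eqref{eqn:p-upper-bound}, so that the only way to force $x_t$ below $\overline{C}$ is through a start-up or shut-down inside the window $[s-j, s]$. Second, I would establish the facet property by the standard direct approach of exhibiting $3T-1$ affinely independent feasible points on the face $F$ defined by \eqref{newineq} at equality, assembled from a single ``base point'' by carefully chosen perturbations.

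For validity, split into three cases. If $y_t = 0$, then $x_t = 0$ by \eqref{eqn:p-lower-bound}--\eqref{eqn:p-upper-bound}, and the right-hand side is nonnegative because $\sum_{i=0}^j u_{s-i} \leq y_s$: when $s \geq L+1$ this follows from \eqref{eqn:p-minup} at $t' = s$ (since $j \leq L-1$ means the sum is a sub-sum), and when $s \leq L$ it follows from the integer observation that any $u_{s-i} = 1$ keeps the generator on through $s$ by the minimum-up constraints, forcing $y_s = 1$. If $y_t = 1$ and $y_s - \sum_{i=0}^j u_{s-i} \geq 1$, then the right-hand side is at least $\overline{C}$ and validity is immediate from \eqref{eqn:p-upper-bound}. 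If $y_t = 1$ and $y_s - \sum_{i=0}^j u_{s-i} = 0$, then either $y_s = 0$ (shut-down between $t$ and $s = t+1$), so applying \eqref{eqn:p-ramp-down} at $s$ with $x_s = 0$ gives $x_t \leq \Vupper$; or $y_s = 1$ with exactly one $u_{s - i_0} = 1$, where $i_0 = 0$ (start-up at $s$) is ruled out by combining \eqref{eqn:p-minup}--\eqref{eqn:p-udef} with $y_t = 1$, while $i_0 \geq 1$ forces a start-up at $s - i_0 \leq t$; since $j \leq 1$, this start-up must occur at $t$ itself, and \eqref{eqn:p-mindn} then forces $x_{t-1} = 0$, so \eqref{eqn:p-ramp-up} at $t$ yields $x_t \leq \Vupper$.

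For the facet property, let $F \subseteq \mathrm{conv}(P)$ denote the face cut out by \eqref{newineq} at equality. The plan is to build $3T - 1$ affinely independent points in $F$ starting from a base point $(\overline{x}, \overline{y}, \overline{u})$ with $\overline{y}_i = 1$, $\overline{u}_i = 0$, and $\overline{x}_i = \overline{C}$ for all $i$, which satisfies \eqref{newineq} with equality since the right-hand side equals $\Vupper + (\overline{C} - \Vupper)\cdot 1 = \overline{C}$. From this base I would generate: (a) $T - 1$ points, one for each $i \ne t$, by lowering $\overline{x}_i$ to $\Clower$, where feasibility of \eqref{eqn:p-ramp-up}--\eqref{eqn:p-ramp-down} is immediate from $V = \overline{C} - \Clower$; (b) a family of points that flip $\overline{y}_i$ from $1$ to $0$ at indices $i \notin \{s-j, \ldots, s, t\}$, adjusting the adjacent $u$'s and $x$'s minimally to restore feasibility while preserving the tightness of \eqref{newineq} at $t$; and (c) a small set of extremal tight configurations, namely $y_t = 0$ with $x_t = 0$, $y_t = 1$ with $y_s = 0$ and $x_t = \Vupper$, and a start-up-at-$t$ configuration with $u_t = 1$ and $x_t = \Vupper$, which supply the independent directions along $y_t$, $y_s$, and the $u_{s-i}$ variables appearing in \eqref{newineq}. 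Ordering coordinates and listing the points so that each new one distinguishes itself from previous ones in a single coordinate produces a triangular difference structure that certifies affine independence.

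The hard part is the facet-definingness step, in particular reconciling construction (b) with the boundary regimes $t = 1$, $t = T$, $s = T$, and $L \in \{1, 2\}$, where $j$ degenerates to $0$ and the ``critical window'' $\{s-j, \ldots, s, t\}$ collapses. In these regimes the family in (b) must be re-indexed and some perturbations in (c) absorbed into (a), so the count must be re-verified case by case. Checking that each perturbation remains in $P$ with respect to the full system \eqref{eqn:p-minup}--\eqref{eqn:p-ramp-down} without breaking tightness of \eqref{newineq} is the main book-keeping bottleneck.
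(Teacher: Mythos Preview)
Your proposal follows essentially the same route as the paper: validity via a case analysis on the binary variables (the paper phrases it as ``discussing the possible values of $y_s$'', you lead with $y_t$ and then branch on $y_s$, but the substance is identical), and facet-definingness via the direct construction of $3T-1$ affinely independent feasible points on the face, starting from an all-on base point and perturbing. Your identification of the boundary regimes ($t=1$, $t=T$, $L=1$, degenerate $j$) as the bookkeeping bottleneck is exactly where the work in the appendix goes; one small imprecision is that in the sub-case $y_t=1$, $y_s=1$, $i_0=0$ you should note that when $s=t$ (i.e.\ $t=T$, $j=0$) the start-up at $s$ is \emph{not} ruled out but instead coincides with the start-up-at-$t$ case you handle next.
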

\begin{proof}
The validity can be easily proved by discussing the possible values of $y_s$. The facet-defining proof can be done by creating feasible and linear independent points. The details are shown in Appendix A.1.
\end{proof}
\begin{theorem} \label{thm:intg_k=1}
When $V = \overline{C}-\Clower$, the convex hull description for conv($P$) can be described as follows:
\begin{eqnarray}
& Q := \Bigl\{ & (x, y, u) \in \R^{3T-1}:  \eqref{eqn:p-minup} - \eqref{eqn:p-lower-bound}, \eqref{newineq}, \nonumber \\
&& u_t \geq 0, \ \forall t \in [2,T]_{\Z} \label{eqn:u-pos} \Bigr\}.
\end{eqnarray}
\end{theorem}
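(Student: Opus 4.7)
The plan is to establish $\mathrm{conv}(P) = Q$ via the two inclusions. The forward direction $\mathrm{conv}(P) \subseteq Q$ is immediate: the constraints \eqref{eqn:p-minup}--\eqref{eqn:p-lower-bound} and $u_t \geq 0$ hold trivially on $P$, while \eqref{newineq} is valid for $P$ by Proposition~\ref{prop:x_t-intg_k=1}, so $P \subseteq Q$ and taking convex hulls preserves the inclusion.

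For the reverse inclusion $Q \subseteq \mathrm{conv}(P)$, I would take an arbitrary extreme point $(x^\ast, y^\ast, u^\ast)$ of the polytope $Q$ and argue that $(x^\ast, y^\ast, u^\ast) \in P$; since $Q$ is a polytope, this suffices. The verification splits into two parts: (a) $y^\ast$ and $u^\ast$ are binary, and (b) the constraints that were dropped in passing from $P$ to $Q$, namely the upper bound \eqref{eqn:p-upper-bound} and the ramping inequalities \eqref{eqn:p-ramp-up}--\eqref{eqn:p-ramp-down}, all hold at $(x^\ast, y^\ast, u^\ast)$. For (a), the $(y,u)$-subsystem consisting of \eqref{eqn:p-minup}--\eqref{eqn:p-udef} together with the implicit bounds $0 \leq y_t \leq 1$ and $u_t \geq 0$ defines an integral polytope by the classical Rajan--Takriti result \cite{rajan2005minimum}. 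If $(y^\ast, u^\ast)$ were fractional, I would disaggregate it as a convex combination of two integer $(y,u)$ points feasible to that subsystem, then lift each to a feasible point of $Q$ by choosing compatible $x$-components (using that the $x$-constraints of $Q$ are affine in $(y, u)$), thereby contradicting the extremality of $(x^\ast, y^\ast, u^\ast)$ in $Q$.

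For (b), assuming $(y^\ast, u^\ast)$ is binary, I would carry out a case analysis on the values of $y^\ast_s$, $y^\ast_t$, and the start-up sum $\sum_{i=0}^j u^\ast_{s-i}$ appearing in \eqref{newineq}. When $y_s^\ast = 1$ and the start-up sum vanishes, \eqref{newineq} degenerates to $x_t^\ast \leq \overline{C}$, which combined with $y_t^\ast = 1$ recovers \eqref{eqn:p-upper-bound}; when $y_s^\ast = 0$ or a start-up is present in the window, \eqref{newineq} sharpens to $x_t^\ast \leq \Vupper y_t^\ast$, which combined with \eqref{eqn:p-lower-bound} at the adjacent period and the identity $V = \overline{C} - \Clower$ delivers the ramp-up inequality \eqref{eqn:p-ramp-up}, and symmetrically \eqref{eqn:p-ramp-down} by applying \eqref{newineq} at an appropriately shifted index. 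The main obstacle will be the index bookkeeping in this case analysis: the definitions $j = \min\{1, L-1, s-2, s-t\}$ and $s = \min\{t+1, T\}$ in \eqref{newineq} produce several boundary regimes---notably $t = T$ (so $s = T$), $L = 1$ (so $j = 0$), and small $s$---in each of which \eqref{newineq} specializes differently and must be matched against the correct dropped constraint, possibly by invoking \eqref{newineq} at both $t$ and $t \pm 1$. Once this matching is done, Proposition~\ref{prop:x_t-intg_k=1} together with the integrality of the $(y,u)$-subsystem closes the proof.
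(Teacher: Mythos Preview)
Your strategy is sound but takes a different route from the paper. The paper argues in the opposite direction: it first characterises the extreme points of $\mathrm{conv}(P)$ directly (showing that at any vertex each $x_t$ lies in $\{0,\Clower,\Vupper,\Cupper\}$, with the values $\Clower$ or $\Vupper$ forced at the two endpoints of every online interval), and then for each such vertex exhibits $3T-1$ inequalities of $Q$ that hold with equality---choosing \eqref{eqn:p-lower-bound} when $x_t\in\{0,\Clower\}$, inequality \eqref{newineq} when $x_t\in\{\Vupper,\Cupper\}$, and the remaining $2T-1$ tight constraints from the Rajan--Takriti description of the $(y,u)$ polytope. Combined with Proposition~\ref{prop:x_t-intg_k=1} (facet-definingness of every inequality in $Q$), this is used to conclude $Q=\mathrm{conv}(P)$. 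Your plan instead takes a vertex of $Q$, projects onto the $(y,u)$ block, invokes Rajan--Takriti integrality, and then recovers the dropped constraints \eqref{eqn:p-upper-bound}--\eqref{eqn:p-ramp-down} by case analysis on the binary values. This is the more standard ``every vertex of the relaxation is feasible'' template and avoids enumerating the vertices of $\mathrm{conv}(P)$; one point you should make explicit in step~(a) is that the disaggregation of $(y^\ast,u^\ast)$ must come with a compatible splitting $x^\ast=\lambda x^1+(1-\lambda)x^2$ with each $(x^i,y^i,u^i)\in Q$, and this works here precisely because each $x_t$ appears in $Q$ only through one lower bound \eqref{eqn:p-lower-bound} and one upper bound \eqref{newineq}, both with right-hand sides affine in $(y,u)$, so the $x$-fibre over any fixed $(y,u)$ is a box and the required decomposition is immediate. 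The paper's approach, in exchange, yields an explicit vertex description of $\mathrm{conv}(P)$ that is reused in the subsequent convex-hull proofs (Theorems~\ref{thm:intg_k=2} and beyond).
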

\begin{proof}
We can easily verify that polytope $Q$ is full-dimensional. We first characterize the extreme points of conv($P$) then show that each extreme point of conv($P$) satisfies $3T-1$ inequalities in $Q$ at equation, and finally conclude that the $Q=$conv$(P)$ due to Proposition \ref{prop:x_t-intg_k=1}.

Based on the minimum-up/-down time restrictions, we can observe that the generator will be online for certain time periods, offline for certain time periods, online for certain time periods, etc. For notation convenience, we assume there are $R$ online intervals (with $t^k_1$ and $t^k_2$ representing the starting and ending time unit for the $k$th ($1 \leq k \leq R$) online interval) for a $T$ time horizon problem. We can easily observe that all extreme points satisfy the following conditions (which can be easily proved by a contradiction method that constructs two feasible points to make this point as the linear combination of these two constructed points). 
\begin{itemize}
\item[1.] $x_{t^k_1}, x_{t^k_2} \in \{\Clower, \Vupper \}$ for each $k: 1\leq k \leq R$.
\item[2.] $x_t \in \{\Clower, \Cupper\}$ for each time period $t$ such that $t^k_1 < t < t^k_2$ for some $k: 1\leq k \leq R$.
\item[3.] $x_t =0$ for each other time period $t$.  
\end{itemize}
Now we can find the $3T-1$ tight inequalities corresponding to each extreme point as follows.
For an extreme point $z=(x_1, x_2, \ldots, x_T; y_1, y_2, \ldots, y_T; u_2, \ldots, u_T)$, if $x_t \in \{0, \Clower\},$ pick $x_t \geq \Clower y_t$, else $x_t \in \{\Vupper, \Cupper\},$ pick inequality~\eqref{newineq}. Thus, here we pick $T$ inequalities based on the value of $x$ part of this extreme point $z$. Now we prove why inequality~\eqref{newineq} is tight corresponding to this extreme point. We discuss this based on the following several conditions:
\begin{itemize}
\item[1)] If $x_t = \Vupper$, then $y_t=1$. From extreme point conditions, we essentially have $u_t=1$. 
\begin{itemize}
\item[(1)] If $s=T,$ i.e., $t=T$, then $\eqref{newineq} \Rightarrow x_t \leq \Vupper y_t + (\Cupper-\Vupper) (y_t - u_t)$ (which is easy to be verified that this is tight as $y_t=u_t=1$ and $x_t=\Vupper$.)
\item[(2)] If $s=t+1$, i.e., $s \leq T-1$, $\eqref{newineq} \Rightarrow x_t \leq \Vupper y_t + (\Cupper-\Vupper) (y_{t+1}-\sum_{i=0}^{\min\{1, L-1, t-1\}} u_{t+1-i}).$
\begin{itemize}
\item[(i)] If $y_{t+1}=0,$ i.e., $u_{t+1}=0$, then $u_t$ does not exist in $\sum_{i=0}^{\min\{1, L-1, t-1\}} u_{t+1-i}$. Therefore, $\eqref{newineq}$ is tight. 
\item[(ii)] If $y_{t+1}=1,$ i.e., $u_{t+1}=0$ and $u_t=1$, then $\eqref{newineq}$ is tight. 
\end{itemize}  
\end{itemize}
\item[2)] If $x_t=\Cupper,$ i.e., $y_{t-1} = y_t = y_{t+1} =1, u_t = u_{t+1} =0$, then it is easy to see that $\eqref{newineq}$ is tight. 
\end{itemize}

Besides these $T$ inequalities, another $2T-1$ tight inequalities can be easily picked from \eqref{eqn:p-minup}-\eqref{eqn:p-lower-bound} and \eqref{eqn:u-pos} based on the values of the $y$ and $u$ parts of this extreme point $z$, since \eqref{eqn:p-minup}-\eqref{eqn:p-lower-bound} and \eqref{eqn:u-pos} already construct the convex hull of the minimum-up/-down time polytope \cite{rajan2005minimum}.
\end{proof}

\begin{remark}
Note here that for the case $V < \overline{C}-\Clower$, the original formulation provides the convex hull description.  
\end{remark}

Next, we consider the case in which $\Cupper=\Clower+2V$. Under this setting, we derive the convex hull description for conv($P$). Without loss of generality, here we let $\Vupper=\Clower$ and then derive the following strong valid inequalities for conv($P$). 

\begin{proposition} \label{prop:x_t-intg-k=2}
For each $t \in [1,T]_{\Z}$, $S \subseteq \{\min \{t+2,T\}\}$, the inequality
\begin{align}
x_t & \leq \Vupper y_t + V \sum_{i \in S \cup \{\min\{t+1,T\}\} } (d_i - i) (y_i - \sum_{j=0}^{\min \{L-1,i-2\}} u_{i-j})+ V \sum_{j=0}^{\min \{L-1,t-2\}} j u_{t-j} \label{eqn:x_t-intg-k2}
\end{align}
is valid and facet-defining for conv($P$), where for each $i \in [t+1, t+2]_{\Z}$, $d_i = \min \{a \in S \cup \{t+3\}: a > i \}$.
\end{proposition}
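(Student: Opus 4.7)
My plan is to prove validity first by a case analysis on the status of the generator near period $t$, then establish the facet property by exhibiting $3T-1$ affinely independent feasible points at which \eqref{eqn:x_t-intg-k2} holds with equality. I will handle the two choices $S=\emptyset$ and $S=\{\min\{t+2,T\}\}$ in parallel, writing $z_i := y_i - \sum_{j=0}^{\min\{L-1,i-2\}} u_{i-j}$ for the ``mature-online'' indicator. Constraint \eqref{eqn:p-minup} immediately gives $0\le z_i\le y_i$, so every term on the right-hand side is nonnegative.

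For validity, suppose first that $y_t=0$; then $x_t=0$ by \eqref{eqn:p-lower-bound} and the bound is trivial. Next suppose $y_t=1$ and there is a recent start-up $u_{t-j^*}=1$ for some $j^*\in[0,\min\{L-1,t-2\}]$. Since $u_{t-j^*}=1$ forces $y_{t-j^*-1}=0$ through \eqref{eqn:p-udef}, we get $x_{t-j^*-1}=0$ and hence $x_{t-j^*}\le \Vupper$; iterating \eqref{eqn:p-ramp-up} forward $j^*$ times (using $\Vupper=\Clower$, so that $x$ can increase by exactly $V$ per step) yields $x_t\le \Vupper + j^*V$, which is already dominated by the sum $\Vupper y_t + V\sum_j j u_{t-j}$. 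Finally, suppose $y_t=1$ with $u_{t-j}=0$ for all relevant $j$; here I trace forward using \eqref{eqn:p-ramp-down}. If the generator shuts down at $t+1$, then $x_t\le \Vupper$ and $z_{t+1}=z_{t+2}=0$, so the RHS equals $\Vupper$ and the bound is tight. If it shuts down at $t+2$, then $x_{t+1}\le \Vupper$, so $x_t\le \Vupper+V$, and exactly one $(d_i-i)z_i$ term equals $1$ for either choice of $S$. If it stays on through $t+2$, the middle sum saturates at $2V$ and the bound reduces to $x_t\le \Vupper+2V=\Cupper$, which is \eqref{eqn:p-upper-bound}.

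For the facet-defining part I would exhibit $3T-1$ affinely independent feasible tight points in three families. Family (i): for each $j\in[0,\min\{L-1,t-2\}]$, a point with start-up at $t-j$ and $x_t=\Vupper+jV$, together with one ``continuously online with $x_t=\Cupper$'' point; these span the $x_t$-direction and the ``past $u$'' directions through $t-j$. Family (ii): a shut-down-at-$(t{+}1)$ point with $x_t=\Vupper$, and, when $\min\{t+2,T\}=t+2$ and $S=\{t+2\}$, a shut-down-at-$(t{+}2)$ point with $x_t=\Vupper+V$; these pin the variables indexed at $t+1$ and $t+2$. Family (iii): shifted base schedules that toggle the generator off for a minimum-down-time block at some period $r$ far from $t$, producing independent directions in $y_r$ and $u_r$ without disturbing the tightness of \eqref{eqn:x_t-intg-k2}. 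Arranging the incidence matrix of these points in block-triangular form then delivers affine independence.

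The main obstacle I anticipate is the boundary bookkeeping near $t=1$, $t=T-1$, and $t=T$, where the $\min$'s in \eqref{eqn:x_t-intg-k2} truncate the summations and several of the three families collapse or have to be replaced by ad hoc tight points---this is presumably the reason the authors defer the construction to the appendix. A secondary point of care is that the two choices of $S$ change the coefficient $d_{t+1}-(t+1)$ from $2$ (when $S=\emptyset$, since then $d_{t+1}=t+3$) to $1$ (when $S=\{\min\{t+2,T\}\}$, since then $d_{t+1}=t+2$ and a separate $Vz_{t+2}$ term appears), so the shut-down point in family (ii) must be aligned with the active coefficient in each case; a corresponding adjustment is also required when $t+2>T$ so that $\min\{t+2,T\}=T$.
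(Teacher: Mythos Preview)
Your approach---case analysis on the on/off pattern near $t$ for validity, and an explicit list of $3T-1$ affinely independent tight points for the facet property---is the standard one and is exactly what the paper does (the details are deferred to Appendix~A.2, following the template of Proposition~\ref{prop:x_t-intg_k=1}).  Two small slips to fix: in the shut-down-at-$(t{+}2)$ subcase with $S=\emptyset$ the single term $(d_{t+1}-(t{+}1))z_{t+1}$ equals $2$, not $1$, so the right-hand side is $\Vupper+2V$ (still valid, but not tight, which matters for your family~(ii) construction); and the implications ``$z_i\ge 0$'' and ``$u_{t-j^*}=1\Rightarrow y_{t-j^*-1}=0$'' do not follow from \eqref{eqn:p-minup} and \eqref{eqn:p-udef} alone (the model allows $u_t>y_t-y_{t-1}$), so you must also invoke the minimum-down constraint \eqref{eqn:p-mindn} to rule out spurious start-ups.
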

\begin{proof}
See Appendix A.2.
\end{proof}

\begin{proposition} \label{prop:ramp-intg-k=2}
For each $t \in [1,T-1]_{\Z}$, the inequalities
\begin{align}
x_{t+1}-x_t & \leq \Vupper y_{t+1} - \Clower y_t + V (y_s - \sum_{i=0}^{j_1}u_{s-i}) \label{eqn:ru-intg-k2} \\
x_t-x_{t+1} & \leq \Vupper y_{t} - \Clower y_{t+1} + V (y_{t+1} - \sum_{i=0}^{j_2}u_{t+1-i}) \label{eqn:rd-intg-k2} 
\end{align}
are valid and facet-defining for conv($P$), where $s=\min\{t+2,T\}$, $j_1=\min\{s-2,1,L-1,s-t-1\}$, and $j_2=\min\{1,t-2,L-1\}$.
\end{proposition}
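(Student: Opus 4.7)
The plan is to prove that each of the two inequalities \eqref{eqn:ru-intg-k2} and \eqref{eqn:rd-intg-k2} is valid by a case analysis on the binary on/off indicators, and then to verify the facet-defining property by exhibiting $3T-1$ affinely independent feasible points satisfying the inequality at equality, in the same spirit as the proofs of Theorem~\ref{thm:intg_k=1} and Proposition~\ref{prop:x_t-intg-k=2}. The two inequalities are related by time reversal, so I would detail \eqref{eqn:ru-intg-k2} and indicate that \eqref{eqn:rd-intg-k2} is handled analogously.

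For validity of \eqref{eqn:ru-intg-k2}, I would branch on $(y_t,y_{t+1})$. If $y_{t+1}=0$, then $x_{t+1}=0$ by \eqref{eqn:p-upper-bound}, and since $j_1\leq L-1$ the minimum-up time constraint \eqref{eqn:p-minup} yields $\sum_{i=0}^{j_1}u_{s-i}\leq y_s$, so the inequality reduces to the generation lower bound \eqref{eqn:p-lower-bound}. If $y_t=0$ and $y_{t+1}=1$, then \eqref{eqn:p-udef} forces $u_{t+1}=1$, the start-up ramp gives $x_{t+1}\leq\Vupper$ with $x_t=0$, and validity follows from the same min-up bound on the $u$-sum. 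If $y_t=y_{t+1}=1$, the assumption $\Vupper=\Clower$ collapses the linear term $\Vupper y_{t+1}-\Clower y_t$ to zero; the ramp-up \eqref{eqn:p-ramp-up} bounds the left-hand side by $V$, and a joint use of the min-up/min-down constraints shows that each $u_{s-i}$ in the sum vanishes under $y_t=y_{t+1}=1$, so the right-hand side reduces to $Vy_s$. The subcase $y_s=0$ (possible only when $s=t+2$) then triggers the shut-down ramp \eqref{eqn:p-ramp-down}, which caps $x_{t+1}\leq\Vupper=\Clower\leq x_t$ and makes the left-hand side nonpositive.

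For the facet-defining claim, following the proof of Theorem~\ref{thm:intg_k=1} I would first record that any extreme point of $P$ under $\Cupper=\Clower+2V$ with $\Vupper=\Clower$ has $x_{t_1^k},x_{t_2^k}\in\{\Clower,\Vupper\}$ at the endpoints of each online interval $[t_1^k,t_2^k]$ and $x_t\in\{\Clower,\Clower+V,\Cupper\}$ at its interior periods, and $x_t=0$ elsewhere. I would then construct a base extreme point on which \eqref{eqn:ru-intg-k2} is tight --- for instance, the schedule in which the generator turns on at $t+1$ with $x_{t+1}=\Vupper$, activating the start-up branch of the inequality --- and generate $3T-2$ additional tight extreme points by (i) toggling the on/off status at each time period outside $\{t,t+1,s\}$, (ii) varying the interior ramp profile on any online interval containing $\{t,t+1,s\}$ while keeping $(x_t,x_{t+1},y_s)$ and the relevant $u_{s-i}$ fixed, and (iii) perturbing the start-up and shut-down positions of adjacent online intervals. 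Affine independence then follows by a block-triangular argument on the resulting perturbation matrix, with each perturbation activating a distinct coordinate of $(x,y,u)$.

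The main obstacle will be handling the boundary cases where $s=T$ or $t$ lies near the start of the horizon: here $j_1$ degenerates and some terms of \eqref{eqn:ru-intg-k2} vanish, shrinking both the catalog of tight extreme points and the admissible perturbations. A separate enumeration will be needed in these regimes to certify that exactly $3T-1$ affinely independent tight points still exist, and the careful counting in this boundary regime is the most delicate part of the argument.
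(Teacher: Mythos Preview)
Your proposal is correct and follows essentially the same approach the paper uses throughout (cf.\ the sketch for Proposition~\ref{prop:x_t-intg_k=1}): validity by case analysis on the binary on/off variables and facet-definingness by exhibiting $3T-1$ affinely independent tight feasible points, with the paper deferring the full details to Appendix~A.3. One small remark: in the present setting $\Vupper=\Clower$, so your endpoint characterization $x_{t_1^k},x_{t_2^k}\in\{\Clower,\Vupper\}$ degenerates to a single value, and the claim that ``each $u_{s-i}$ in the sum vanishes under $y_t=y_{t+1}=1$'' relies on the min-down constraint \eqref{eqn:p-mindn} at time $t+\ell$, which is only available when $t+\ell\le T$ --- you already flag such horizon-boundary issues in your last paragraph, and they are exactly the cases that require the separate enumeration you anticipate.
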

\begin{proof}
See Appendix A.3.
\end{proof}

\begin{theorem} \label{thm:intg_k=2}
When $\overline{C}=\Clower+2V$ and $\Vupper=\Clower$, the convex hull description for conv($P$) can be described as follows:
\begin{eqnarray}
& Q := \Bigl\{ (x, y, u) \in \R^{3T-1}:  \eqref{eqn:p-minup} - \eqref{eqn:p-lower-bound}, \eqref{eqn:u-pos}, \eqref{eqn:x_t-intg-k2}, \eqref{eqn:ru-intg-k2}, \eqref{eqn:rd-intg-k2} \Bigr\}. \nonumber
\end{eqnarray}
\end{theorem}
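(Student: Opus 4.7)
The plan is to mirror the strategy used in the proof of Theorem \ref{thm:intg_k=1}: first verify that $Q$ is full-dimensional, next characterize the extreme points of conv($P$) under the new regime $\Cupper=\Clower+2V$ and $\Vupper=\Clower$, and finally exhibit, at every extreme point, $3T-1$ linearly independent tight inequalities from the description of $Q$. Because Propositions \ref{prop:x_t-intg-k=2} and \ref{prop:ramp-intg-k=2} already guarantee that \eqref{eqn:x_t-intg-k2}, \eqref{eqn:ru-intg-k2}, and \eqref{eqn:rd-intg-k2} are facet-defining for conv($P$), this combination will force $Q=\text{conv}(P)$.

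To characterize the extreme points I would decompose the horizon into $R$ online intervals $[t_1^k,t_2^k]$ separated by offline stretches, and run the usual midpoint-of-two-feasible-points contradiction on each continuous coordinate. Since $\Vupper=\Clower$, the start-up and shut-down ramp bounds collapse onto $\Clower$, which forces $x_{t_1^k}=x_{t_2^k}=\Clower$ (boundary cases $t_1^1=1$ or $t_2^R=T$ are handled directly by perturbing $x$ up and down by the same small scalar). Because $\Cupper=\Clower+2V$ is exactly two ramp steps above $\Clower$, every interior value must lie in $\{\Clower,\Clower+V,\Cupper\}$, while $x_t=0$ whenever $y_t=0$. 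Thus every extreme point is parameterized by a minimum-up/-down-feasible $(y,u)$ pattern together with, on each online interval, a sequence of values in $\{\Clower,\Clower+V,\Cupper\}$ whose consecutive differences lie in $[-V,V]$ and whose endpoints equal $\Clower$.

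For the required $3T-1$ tight rows at a given extreme point $z$, I would supply $2T-1$ $(y,u)$-rows drawn from \eqref{eqn:p-minup}-\eqref{eqn:p-udef} and \eqref{eqn:u-pos}, reusing the fact that these inequalities describe the convex hull of the minimum-up/-down time polytope \cite{rajan2005minimum} and hence contribute a full row-rank $(y,u)$-block. The remaining $T$ $x$-rows are then chosen time-by-time according to the local shape of $z$: \eqref{eqn:p-lower-bound} whenever $x_t\in\{0,\Clower\}$; \eqref{eqn:ru-intg-k2} or \eqref{eqn:rd-intg-k2} whenever $x_t$ differs from a neighbor by exactly $V$ in a direction not already absorbed; and \eqref{eqn:x_t-intg-k2} with $S$ taken as $\emptyset$ or $\{\min\{t+2,T\}\}$ (depending on whether the extra $V$-increment needed to reach $x_t$ occurs at time $t+1$ or at time $t+2$) whenever $x_t=\Cupper$, or when $x_t=\Clower+V$ in a configuration not already covered by a ramping inequality. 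Linear independence of the full $3T-1$ chosen rows follows from a block-triangular argument: each selected $x$-row has its only nonzero $x$-coefficient on a distinct coordinate $x_t$, producing a signed identity block in the $x$-coordinates that is triangular against the $(y,u)$-block.

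The step I expect to be most delicate is the explicit case analysis verifying tightness of \eqref{eqn:x_t-intg-k2} for every admissible local pattern around $t$. The bookkeeping requires tracking how the coefficient $(d_i-i)$, controlled by the choice of $S\subseteq\{\min\{t+2,T\}\}$, and the coefficients $j$ in the lift $V\sum_j j\,u_{t-j}$, combine with the location of any recent start-up so that the right-hand side contributes exactly $0$, $V$, or $2V$ of additional slack beyond $\Vupper y_t$, matching the observed value $x_t-\Vupper y_t\in\{0,V,2V\}$. Once this case-by-case tightness is verified uniformly, combining it with full dimensionality and the facet-defining results of Propositions \ref{prop:x_t-intg-k=2} and \ref{prop:ramp-intg-k=2} completes the proof.
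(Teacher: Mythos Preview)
Your proposal follows essentially the same route as the paper: characterize the extreme points of $\mathrm{conv}(P)$, then at each extreme point exhibit $3T-1$ tight inequalities from $Q$, split as $2T-1$ rows coming from the minimum-up/-down convex hull of \cite{rajan2005minimum} on the $(y,u)$ block and $T$ rows chosen according to the local value of $x_t$, and finally invoke the facet-defining Propositions to conclude $Q=\mathrm{conv}(P)$. Your extreme-point characterization ($x_{t_1^k}=x_{t_2^k}=\Clower$ away from the horizon boundaries, interior values in $\{\Clower,\Clower+V,\Cupper\}$) is the right specialization once $\Vupper=\Clower$ and $\Cupper=\Clower+2V$.

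One small inaccuracy worth fixing: your linear-independence justification asserts that ``each selected $x$-row has its only nonzero $x$-coefficient on a distinct coordinate $x_t$''. This is true for \eqref{eqn:p-lower-bound} and \eqref{eqn:x_t-intg-k2}, but \eqref{eqn:ru-intg-k2} and \eqref{eqn:rd-intg-k2} each carry \emph{two} $x$-coefficients (on $x_t$ and $x_{t+1}$). The independence argument therefore does not reduce to a signed identity block in the $x$-coordinates; what you actually obtain is a bidiagonal (or, when ramp-up and ramp-down rows are mixed, a sign-alternating tridiagonal) $x$-block, which is still nonsingular but requires you to fix an ordering and check that the off-diagonal entries do not destroy invertibility. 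This is a bookkeeping correction rather than a structural gap, and once addressed your plan aligns with the paper's own proof.
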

\begin{proof}
The proof is similar to that for Theorem \ref{thm:intg_k=1} by characterizing the extreme points of conv($P$) and showing each corresponding extreme point satisfies $3T-1$ inequalities in $Q$. We show the details in Appendix A.4.
\end{proof}

\begin{remark}
Inequalities~\eqref{newineq} and \eqref{eqn:x_t-intg-k2} - \eqref{eqn:rd-intg-k2} are different from any inequalities in the convex hull descriptions of the separate up and down polytope, i.e., the following $\mbox{conv}(P^U)$ and $conv(P^D)$.
\end{remark}

\section{Integrated Minimum-Up/-Down Time and Ramping-Up (or-Down) Polytope Convex Hull Results} \label{sec:valid-ineq}
In this section, we derive the convex hull results for the integrated minimum-up/-down time and ramping-up polytope, and the integrated minimum-up/-down time and ramping-down polytope, respectively. For this study, we do not restrict $V = \overline{C}-\Clower$ or $V = (\overline{C}-\Clower)/2$.

\begin{proposition} \label{prop:x_t-up-exp}
For each $t \in [1, T]_{\Z}$, $m \in [0, \min \{[t-L-1]^+, (\Cupper-\Vupper)/V, \fred{[\lfloor (\Cupper-\Clower)/V \rfloor - L]^+} \}]_{\Z}$, $S \subseteq [t-m+1, t-1]_{\Z}$, the inequality
\begin{align}
x_t & \leq \Vupper y_t + (L-1) V (y_t - \sum_{j=0}^{\min \{L-1,t-2, \fred{\kappa}\}} u_{t-j}) + V \sum_{i \in S \cup \{t\} } (i - d_i) (y_i - \sum_{j=0}^{\min \{L-1,i-2,\fred{\kappa}\}} u_{i-j}) \nonumber \\
 & + (\Cupper - \Vupper - (m+L-1) V) (y_{t-m} - \sum_{j=0}^{\min\{L-1,t-m-2,\fred{\kappa}\}} u_{t-m-j}) + V \sum_{j=0}^{\min \{L-1,t-2,\fred{\kappa}\}} j u_{t-j} \label{eqn:x_t-up-exp}
\end{align}
is valid and facet-defining for conv($P^U$), where \fred{$\kappa = \lceil (\Cupper-\Clower)/V \rceil - 1$} and for each $i \in [t-m+1, t]_{\Z}$, $d_i = \max \{a \in S \cup \{t-m\}: a < i \}$ and if $m=0$, then $d_t=t$.
\end{proposition}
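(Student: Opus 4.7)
The plan is to follow the two-step structure used for Propositions \ref{prop:x_t-intg_k=1}, \ref{prop:x_t-intg-k=2}, and \ref{prop:ramp-intg-k=2} (see Appendices A.1--A.3): first establish validity of \eqref{eqn:x_t-up-exp} by case analysis on the generator's most recent start-up time, then show the inequality is facet-defining by exhibiting $3T-1$ affinely independent feasible points at which it is tight.

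For validity, let $(x, y, u) \in P^U$. A key preliminary observation is the telescoping identity
\[
\sum_{i \in S \cup \{t\}} (i - d_i) \;=\; t - (t-m) \;=\; m,
\]
which follows directly from the definition of $d_i$ as the largest element of $S \cup \{t-m\}$ strictly below $i$. The minimum up-time constraints \eqref{eqn:p-minup} together with \eqref{eqn:p-udef} imply that every quantity $y_i - \sum_{j=0}^{\min\{L-1,i-2,\kappa\}} u_{i-j}$ is nonnegative, and the range of $m$ guarantees $\Cupper - \Vupper - (m+L-1)V \geq 0$, so every coefficient on the right-hand side is nonnegative. If $y_t = 0$, then $x_t = 0$ by \eqref{eqn:p-upper-bound}, and the inequality follows immediately. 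If $y_t = 1$, let $t^*$ denote the most recent start-up time at or before $t$. Iterating \eqref{eqn:p-ramp-up} from $t^*$ forward yields $x_t \leq \min\{\Cupper,\, \Vupper + (t - t^*)V\}$, and I would split into three subcases: (i) $t - t^* > \kappa$, in which case every $y_i - \sum u_{i-j}$ in the relevant range equals $1$ and the telescoping identity collapses the right-hand side to exactly $\Cupper$; (ii) $t^* \in [t-m,\, t]$ with $t - t^* \leq \kappa$, in which case $u_{t^*}$ contributes $(t - t^*)V$ through the final $V \sum j u_{t-j}$ term while the indicator sums at indices $i \geq t^*$ vanish, yielding exactly $\Vupper + (t - t^*)V$; (iii) $t^* < t - m$ with $t - t^* \leq \kappa$, in which case a truncated telescoping argument combining the $(\Cupper - \Vupper - (m + L - 1)V)$ term at $t - m$ with the $V(i - d_i)$ contributions for $i \in S \cup \{t\}$ past $t^*$ matches $\Vupper + (t - t^*)V$.

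For the facet-defining claim, $\mbox{conv}(P^U)$ has dimension $3T - 1$ (shown as in Theorem \ref{thm:intg_k=1}), so it suffices to exhibit $3T - 1$ affinely independent feasible points satisfying \eqref{eqn:x_t-up-exp} at equality. A natural base schedule keeps the generator online throughout $[t - m - \kappa,\, t]$ with $x_t = \Cupper$, and I would construct perturbations of three flavors: (a) toggling single off-intervals of length $\ell$ outside $[t-m, t]$ while preserving the minimum up/down-times, (b) shifting individual start-up times by one step (respecting $L$), and (c) adjusting single $x_i$ coordinates at times where the ramp-up constraints allow slack. Ordering these so that each new point introduces a previously unchanged $(x, y, u)$ coordinate yields affine independence by the standard triangular-matrix argument.

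The main obstacle is validity subcase (iii): when the generator started strictly before $t - m$ but still within the last $\kappa$ periods, the coefficient $(\Cupper - \Vupper - (m + L - 1)V)$ at time $t - m$ is only partially absorbed, since $u_{t^*}$ enters $\sum u_{(t-m)-j}$ for small $j$, and the leftover slack must be exactly compensated by the $V(i - d_i)$ contributions for $i \in S \cup \{t\}$ past $t^*$ together with the $V j u_{t-j}$ correction. Verifying that the truncated version of the telescoping identity, restricted to the portion of $S \cup \{t\}$ above $t^*$, matches $\Vupper + (t - t^*)V$ is the most delicate algebraic step. On the facet-defining side, the main care is required near the boundary values of $m$, where $\Cupper - \Vupper - (m + L - 1)V$ may degenerate to zero and where the choice of $S$ interacts with the minimum up/down-time constraints, forcing the base schedule to be chosen carefully so that all $3T - 1$ constructed points remain feasible and tight.
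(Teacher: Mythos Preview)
Your two-step plan (validity by case analysis on the most recent start-up, then $3T-1$ affinely independent tight points) is exactly the paper's template, as used in Appendices A.1--A.3 for Propositions \ref{prop:x_t-intg_k=1}--\ref{prop:ramp-intg-k=2} and referenced again in Appendix B.1 for this proposition. The telescoping identity $\sum_{i\in S\cup\{t\}}(i-d_i)=m$ and the nonnegativity of the bracket terms are the right opening moves.

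There is, however, a real gap in your validity case split. You organise the subcases by comparing $t-t^*$ to $\kappa$ alone, and in subcase (ii) you assert that ``the indicator sums at indices $i\ge t^*$ vanish'' and that ``$u_{t^*}$ contributes $(t-t^*)V$ through the final $V\sum j u_{t-j}$ term.'' Both statements require $i-t^*\le \min\{L-1,\,i-2,\,\kappa\}$ for every relevant $i$, in particular $t-t^*\le L-1$. But the parameter range permits $m> L-1$ (indeed $m$ can be as large as $\lfloor(\Cupper-\Clower)/V\rfloor-L$, independent of $L$), so a start-up at $t^*\in[t-m,\,t-L]$ lies outside the window of the final correction term and outside the window for $y_t$, yet inside the window for some earlier $i\in S\cup\{t\}$. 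In that regime the brackets $(y_i-\sum u_{i-j})$ equal $0$ for $i\in[t^*,\,t^*+L-1]$ and $1$ for $i\ge t^*+L$, and the right-hand side must be assembled from the $(L-1)V$ term, a \emph{partial} telescoping over $\{i\in S\cup\{t\}:i\ge t^*+L\}$, and possibly the $(\Cupper-\Vupper-(m+L-1)V)$ term, not from the $V\sum j u_{t-j}$ term at all. Your subcase (iii) has the same blind spot in reverse: when $t^*<t-m$ the window at $t-m$ is again governed by $\min\{L-1,t-m-2,\kappa\}$, so whether $u_{t^*}$ is absorbed there depends on $L-1$, not just on $\kappa$. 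The fix is to refine the case split so that it tracks which of $L-1$ and $\kappa$ binds in each window (equivalently, split on whether $t-t^*\le L-1$, $L-1<t-t^*\le m+L-1$, or $t-t^*>m+L-1$), and then verify the partial telescoping in each regime. The facet-defining sketch is fine at this level of detail.
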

\begin{proof}
See Appendix B.1.
\end{proof}

\begin{proposition} \label{prop:x_t-down-exp}
For each $t \in [1, T]_{\Z}$, $m \in [\min \{[T-t-1]^+, L-1, \fred{(\Cupper-\Vupper)/V}\}, \min \{[T-t-1]^+, (\Cupper-\Vupper)/V \}]_{\Z}$, $S \subseteq [t+L+1, t+m]_{\Z}$, the inequality
\begin{align}
x_t & \leq \Vupper y_t + V \sum_{i=1}^{\min\{m,L-1\}} (y_{t+i} - \sum_{j=1}^i u_{t+j}) + V \sum_{i \in S \cup \{t+L\}} (d_i - i) (y_i - \sum_{j=0}^{\min\{m,L-1\}} u_{i-j})  \nonumber \\
 & + (\Cupper - \Vupper - m V) (y_{t+m+1} - \sum_{j=0}^{\min\{m,L-1\}} u_{t+m+1-j}) \label{eqn:x_t-down-exp}
\end{align}
is valid and facet-defining for conv($P^D$), where for each $i \in [t+L, t+m]_{\Z}$, $d_i = \min \{a \in S \cup \{t+m+1\}: a > i \}$ and if $m \leq L-1$, then $d_{t+L}=t+L$. Meanwhile, we let $y_{T+1}=y_T$ and $u_{T+1}=0$.
\end{proposition}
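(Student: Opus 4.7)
The plan is to mirror the development for the ramping-up polytope in Proposition~\ref{prop:x_t-up-exp}, but with the ramp-down constraint~\eqref{eqn:p-ramp-down} propagated forward in time from $t$ rather than backward. For validity I would first dispose of the case $y_t=0$: then $x_t=0$ by~\eqref{eqn:p-lower-bound}, and every bracketed quantity on the right-hand side is non-negative because each expression of the form $y_i-\sum_{j}u_{i-j}$ lies in $\{0,1\}$ as a consequence of the minimum-up/-down time structure in~\eqref{eqn:p-minup}--\eqref{eqn:p-udef}, in the spirit of Rajan--Takriti~\cite{rajan2005minimum}. For $y_t=1$, let $k\in\{0,1,\ldots,m\}$ be the smallest index with $y_{t+k+1}=0$ (under the conventions $y_{T+1}=y_T$, $u_{T+1}=0$), or set $k=m+1$ if no such index exists within the window. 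Iterating~\eqref{eqn:p-ramp-down} gives $x_t\leq x_{t+k}+kV$, and either $x_{t+k}\leq\Vupper$ (one further application with $y_{t+k+1}=0$ and $x_{t+k+1}=0$) or $x_t\leq\Cupper$ by~\eqref{eqn:p-upper-bound}, so $x_t\leq\Vupper+kV$ in the first subcase and $x_t\leq\Cupper$ in the second.

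It then remains to verify that the right-hand side of~\eqref{eqn:x_t-down-exp} dominates these two bounds. The key observation is that the second sum telescopes over the partition of $[t+L,\,t+m+1]$ induced by $S\cup\{t+L,\,t+m+1\}$. When the generator stays online throughout the window it collapses to $V(m-L+1)$, combining with the first sum and the $(\Cupper-\Vupper-mV)$ term to give exactly $\Cupper$. When the generator shuts down at $t+k+1$, only the anchors at or below $t+k$ contribute, but the telescoping truncates at the next anchor strictly above $t+k$, which is always at least $t+k+1$, so the truncated sum contributes at least $V(k-L+1)$ when $k\geq L$; combined with the first sum, which contributes $V\min\{k,L-1\}$, this yields the required $\Vupper+kV$. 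The convention $d_{t+L}=t+L$ for $m\leq L-1$ simply deactivates the $t+L$ anchor in the regime where $S=\emptyset$ and the first sum already accounts for every intermediate step. A routine sub-case analysis handles restarts within $[t+k+\ell+1,\,t+m+1]$ by invoking~\eqref{eqn:p-mindn} to bound the number of startups inside the $\kappa$-window and showing the resulting indicators stay non-negative.

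For facet-definedness, the plan is to exhibit $3T-1$ affinely independent feasible points at which~\eqref{eqn:x_t-down-exp} is tight, following the recipe in the proof of Theorem~\ref{thm:intg_k=1}. I would anchor with a greedy ramp-down trajectory in which the generator is online throughout $[1,T]$, $x_t=\Cupper$, and $x$ descends by $V$ per step down to $\Vupper$ at the anchors $S\cup\{t+L,\,t+m+1\}$, with $x_s$ set to its local extreme elsewhere. The $3T-1$ independent directions are then generated by (i) perturbing individual $x_s$ ($s\neq t$) within local slack while keeping $y$ and $u$ fixed, (ii) toggling isolated on/off runs where feasible, and (iii) shifting single start-up or shutdown events by one period; each perturbation lies in the kernel of~\eqref{eqn:x_t-down-exp} because every indicator at the anchor already takes its extreme value. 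The main obstacle is the boundary bookkeeping: when $t+m+1\geq T$ the conventions $y_{T+1}=y_T$ and $u_{T+1}=0$ must be invoked to reinterpret the $(\Cupper-\Vupper-mV)$ term, and when $m\leq L-1$ the set $S=\emptyset$ together with the rule $d_{t+L}=t+L$ reshape the inequality; in both regimes the anchor trajectory and the independent perturbations must be reconstructed case by case, in a manner analogous to the tedious but routine construction deferred to Appendix~B.1 for Proposition~\ref{prop:x_t-up-exp}.
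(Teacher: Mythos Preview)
Your plan is essentially the paper's own: the proof in Appendix~B.2 is the time-reversed mirror of Appendix~B.1, establishing validity by a forward case analysis on the first shutdown after $t$ (using~\eqref{eqn:p-ramp-down} iteratively and the non-negativity of the bracketed indicators from the minimum-up structure), and establishing facet-definingness by exhibiting $3T-1$ affinely independent tight points built around an all-on baseline and then perturbed coordinate by coordinate. Your telescoping observation for the $S\cup\{t+L\}$ sum and the handling of the $m\le L-1$ degeneracy via $d_{t+L}=t+L$ are exactly the mechanisms the paper relies on; the only place your sketch is a bit loose is the description of the facet anchor trajectory (``descends by $V$ per step down to $\Vupper$ at the anchors'' does not quite parse as written---what you actually need is a family of tight points indexed by which anchor in $S\cup\{t+L,t+m+1\}$ is the last online period, with $x$ ramping down from that anchor), but this is a presentational issue rather than a gap in the argument.
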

\begin{proof}
See Appendix B.2.
\end{proof}

\begin{proposition} \label{prop:ru-2-exp}
For each $t \in [2, T]_{\Z}$, $m \in [1, \min \{t-1, \fred{\lceil (\Cupper-\Clower)/V \rceil - 1} \}]_{\Z}$, $S_0 \subseteq [t-m+L, t-1]_{\Z}$, $S = S_0 \cup \{t\}$, $q = \min \{a \in S\}$, $\delta = \min \{L-1,m-1\}$, the inequality
\begin{align}
 x_t - x_{t-m} & \leq \Vupper y_t - \Clower y_{t-m} + V \sum_{i \in S \setminus \{t-m+L\} } (i - d_i) (y_i - \sum_{j=0}^{\delta} u_{i-j}) \nonumber \\
 & + \delta V (y_t - \sum_{j=0}^{ \delta} u_{t-j}) + (\Clower + V-\Vupper) (y_q - \sum_{j=0}^{\delta} u_{q-j}) + V \sum_{j=0}^{\delta} j u_{t-j}, \label{eqn:ru-2-exp}
\end{align}
is valid and facet-defining for conv($P^U$), where for each $i \in S$, $d_i = \max \{a \in S \cup \{t-m+L\}: a < i \}$ and if $m \leq L$, then $d_t=t$.
\end{proposition}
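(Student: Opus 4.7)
The plan is to prove validity by a case analysis on the online/offline pattern of the generator within $[t-m,t]$, and to prove that \eqref{eqn:ru-2-exp} is facet-defining by exhibiting $3T-1$ affinely independent feasible points at which it holds as an equation, in the same spirit as the proofs of Propositions~\ref{prop:x_t-up-exp} and~\ref{prop:x_t-down-exp}.

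For validity, first I would pin down the combinatorial meaning of each building block on the right-hand side: for every $i$ with $\delta+1 \leq L$, the quantity $y_i - \sum_{j=0}^{\delta} u_{i-j}$ is nonnegative by \eqref{eqn:p-minup} and equals $1$ exactly when the generator is online at time $i$ and has been continuously online for more than $\delta$ periods. I would then split on $y_{t-m}$. If $y_{t-m}=0$, then $x_{t-m}=0$ by \eqref{eqn:p-lower-bound} and the inequality reduces to an upper bound on $x_t$ of the same flavor as Proposition~\ref{prop:x_t-up-exp}, which is proved by telescoping \eqref{eqn:p-ramp-up} from the most recent start-up time to $t$. If $y_{t-m}=1$, then $x_{t-m}\geq \Clower$ by \eqref{eqn:p-lower-bound}, and it suffices to show $x_t\leq \Clower+(\Vupper-\Clower)y_t$ plus the remaining right-hand side terms. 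Telescoping \eqref{eqn:p-ramp-up} from $t-m+1$ to $t$ yields $x_t-x_{t-m}\leq \sum_{i=t-m+1}^{t}\bigl(Vy_{i-1}+\Vupper(1-y_{i-1})\bigr)$, where each summand is at most $V$ when the generator has been online long enough and at most $\Vupper$ otherwise. The pieces $V(i-d_i)(y_i-\sum_{j=0}^\delta u_{i-j})$, $\delta V(y_t-\sum_{j=0}^\delta u_{t-j})$, and $V\sum_{j=0}^\delta j\,u_{t-j}$ are calibrated so that their sum dominates the telescoped contributions of $V$, while $(\Clower+V-\Vupper)(y_q-\sum_{j=0}^\delta u_{q-j})$ absorbs the ``slack'' between the start-up ramp $\Vupper$ and the normal ramp $\Clower+V$ at the leftmost index $q\in S$ where the generator has been online long enough to ramp at rate $V$.

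For the facet-defining claim, full-dimensionality of $\mathrm{conv}(P^U)$ being already available, I would construct $3T-1$ affinely independent feasible points on which \eqref{eqn:ru-2-exp} is tight. Three families of perturbations suffice: the choice of online intervals, always preserving one interval containing $[q-L+1,t]$ so that $y_q=1$ and $\sum_{j=0}^\delta u_{q-j}=0$ and the $(\Clower+V-\Vupper)$ term is activated; the production profile, ramping at the maximum allowable rate from the latest start-up time up to $t$ so that every invoked ramp-up constraint becomes tight; and single-time perturbations (flipping the status or adjusting $x$ at a time period outside the window $[t-m,t]$) that preserve tightness of \eqref{eqn:ru-2-exp} while realizing the remaining affine directions. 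The main obstacle is the combinatorial bookkeeping when $|S_0|\geq 2$: the coefficients $(i-d_i)$ couple together several ``long-enough-on'' indicators, so attaining equality simultaneously at all $i\in S$ requires trajectories that ramp at rate $V$ precisely on the intervals between consecutive elements of $S\cup\{t-m+L\}$ and at rate $\Vupper$ immediately after start-up. I expect this to be handled by induction on $|S_0|$, paralleling the appendices for Propositions~\ref{prop:x_t-up-exp} and~\ref{prop:x_t-down-exp}, and the explicit point lists would be deferred to the appendix.
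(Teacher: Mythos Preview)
Your overall plan---validity by case analysis on the on/off pattern in $[t-m,t]$, facet-defining by exhibiting $3T-1$ affinely independent tight points---is the same methodology the paper uses throughout (cf.\ the visible proof of Proposition~\ref{prop:x_t-intg_k=1} and the convex-hull arguments in Theorems~\ref{thm:intg_k=1}--\ref{thm:intg_k=2}). The paper's own proof of Proposition~\ref{prop:ru-2-exp} is deferred to Appendix~B.3, which is not included in the excerpt, so a line-by-line comparison is not possible; but there is no indication the paper departs from this standard template.

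One point in your validity sketch should be tightened. In the case $y_{t-m}=1$ you first write ``it suffices to show $x_t\leq \Clower+(\Vupper-\Clower)y_t$ plus the remaining right-hand side terms'' (i.e., you discard $x_{t-m}$ via $x_{t-m}\geq\Clower$), and then immediately switch to telescoping \eqref{eqn:p-ramp-up} to bound $x_t-x_{t-m}$ directly. These are two different reductions, and mixing them is confusing. The lower-bound replacement $x_{t-m}\mapsto\Clower$ is only useful in subcases where the generator is \emph{not} continuously on throughout $[t-m,t]$ (so the telescoped sum is not available across a shutdown); when the generator stays on, you must use the telescoped ramp bound itself and compare it term-by-term to the right-hand side, because discarding $x_{t-m}$ there would lose exactly the $mV$ you need. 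So the case split should really be on the start-up pattern inside $[t-m,t]$ (equivalently, on which $u_{t-j}$ is $1$, if any), not merely on $y_{t-m}$; that is also what makes the term $V\sum_{j=0}^{\delta} j\,u_{t-j}$ do its job.

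For the facet part, your three families of perturbations are the right idea, and the ``induction on $|S_0|$'' remark is consistent with how the paper handles the nested $(i-d_i)$ structure in the companion inequalities. Just be aware that to realize all $3T-1$ directions you will also need points with $y_{t-m}=0$ (so that the $-\Clower y_{t-m}$ coefficient is pinned down) and points where the generator starts up at various times in $[t-\delta,t]$ (so that each coefficient $jV$ in $V\sum_{j=0}^{\delta} j\,u_{t-j}$ is exercised); your current description only guarantees an interval containing $[q-L+1,t]$, which would leave those $u$-directions unaddressed.
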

\begin{proof}
See Appendix B.3.
\end{proof}

\begin{proposition} \label{prop:rd-2-exp}
For each $t \in [1, T-1]_{\Z}$, $m \in [1, \min \{T-t, \fred{\lceil (\Cupper-\Clower)/V \rceil - 1} \}]_{\Z}$, $S_0 = [t+L+1, t+m]_{\Z}$, $S = S_0 \cup \{t+L\}$, $q = \min \{a \in S\}$, $\delta = \min \{L-1,m-1\}$, the inequality
\begin{align}
x_t - x_{t+m} & \leq \Vupper y_t - \Clower y_{t+m} + V \sum_{i=1}^{\delta} (y_{t+i} - \sum_{j=1}^{i} u_{t+j}) + V \sum_{i \in S \setminus \{t+m\}} (d_i - i) (y_i - \sum_{j=0}^{\delta} u_{i-j})  \nonumber \\
 & + (\Clower + V - \Vupper) (y_{q} - \sum_{j=0}^{ \delta } u_{q-j}) \label{eqn:rd-2-exp}
\end{align}
is valid and facet-defining for conv($P^D$), where for each $i \in S \setminus \{t+m\}$, $d_i = \min \{a \in S \cup \{t+m\}: a > i \}$ and if $m \leq L$, then $d_{t+L}=t+L$.
\end{proposition}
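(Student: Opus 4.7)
The plan is to follow the same two-stage scheme used for Propositions \ref{prop:x_t-up-exp}--\ref{prop:ru-2-exp}: first establish validity by case analysis on the binary status variables, and then prove the inequality is facet-defining by exhibiting $3T-1$ affinely independent feasible points of $P^D$ at which \eqref{eqn:rd-2-exp} holds with equality.

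For validity, I would split on the value of $y_t$. If $y_t = 0$, then \eqref{eqn:p-upper-bound} forces $x_t = 0$, so it suffices to verify
\[
-x_{t+m} \leq -\Clower y_{t+m} + R,
\]
where $R$ denotes the remaining right-hand side of \eqref{eqn:rd-2-exp}. The needed bound follows from \eqref{eqn:p-lower-bound} once one checks that each parenthesized expression of the form $y_i - \sum_{j} u_{i-j}$ is nonnegative; this is the standard minimum-up window bound obtained by combining \eqref{eqn:p-minup} with \eqref{eqn:p-udef}. If $y_t = 1$, the core step is to telescope \eqref{eqn:p-ramp-down} from $t+1$ to $t+m$, giving
\[
x_t - x_{t+m} \leq \sum_{i=t+1}^{t+m} \bigl( V y_i + \Vupper (1 - y_i)\bigr),
\]
and then to tighten this coarse bound to the structured expression in \eqref{eqn:rd-2-exp}. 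The $V(y_{t+i} - \sum_{j=1}^i u_{t+j})$ terms for $i \leq \delta$ account for the unavoidable ramp-down inside the minimum-up window right after $t$; the sum over $S \setminus \{t+m\}$ captures the discrete choice of which period in $[t+L+1, t+m-1]$ serves as a ``witness'' for the ramp transition; and the term $(\Clower+V-\Vupper)(y_q - \sum_{j} u_{q-j})$ at $q = \min S$ compensates for the gap between the shut-down coefficient $\Vupper$ and the in-region coefficient $\Clower+V$. Under the maintained assumption $\Clower < \Vupper < \Clower+V$, every coefficient is nonnegative, so validity will hold for every admissible on/off pattern of $y$ over $[t, t+m]$.

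For the facet-defining portion, I would first note that $P^D$ is full-dimensional, $\dim P^D = 3T-1$, via a standard construction using schedules where the generator is essentially off. Then I would build $3T-1$ affinely independent points on the face $F = \{(x,y,u) \in P^D : \eqref{eqn:rd-2-exp} \text{ holds at equality}\}$. A ``core'' point corresponds to the schedule with the generator online on a minimal interval containing $[t, t+m]$, with $x_t = \Vupper$, $x_{t+i} = \Vupper - iV$ for $1 \leq i \leq \delta$, and $x = \Clower$ afterwards up to $t+m$, and shut down at the period $q$. Additional points on $F$ are generated by (i) toggling on/off patterns of disjoint start-up/shut-down cycles outside the support of \eqref{eqn:rd-2-exp}, respecting \eqref{eqn:p-minup}--\eqref{eqn:p-mindn}; (ii) varying $x$ on time periods not in the inequality's support between $\Clower$ and $\Cupper$, using the slack in \eqref{eqn:p-ramp-up}--\eqref{eqn:p-ramp-down}; and (iii) for each $i \in S \setminus \{q\}$, replacing the ramp transition by one whose shut-down structure agrees with $S$. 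A standard coefficient-matching argument — suppose $\alpha^{\top} x + \beta^{\top} y + \gamma^{\top} u \leq \alpha_0$ holds at equality on all of $F$, then compare successive constructed points to pin down ratios of $\alpha$, $\beta$, and $\gamma$ coordinates — would show this inequality is a positive scalar multiple of \eqref{eqn:rd-2-exp}, concluding the proof.

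The main obstacle I expect is the bookkeeping in the facet-defining portion: the coefficients in \eqref{eqn:rd-2-exp} depend intricately on $S$, $d_i$, $q$, and $\delta = \min\{L-1, m-1\}$, and one must handle the two regimes $m \leq L$ (where the convention $d_{t+L} = t+L$ kicks in) and $m > L$ in parallel without losing affine independence. Boundary cases such as $m = T-t$, $m = 1$, or very small $L$ shrink the space of admissible perturbations and require individualized checks that $3T-1$ points on $F$ still fit. Packaging these cases into a uniform argument — as will be done for the companion Proposition \ref{prop:ru-2-exp} — is the delicate step, and I would mirror the organization of the Appendix B.3 proof to keep the notation tractable.
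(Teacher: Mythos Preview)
Your two-stage plan (validity by case analysis on the binary pattern, then facet-defining via $3T-1$ affinely independent tight points) is exactly the template the paper uses for this family of propositions, and mirroring the Appendix~B.3 organization for the companion ramp-up inequality is the right instinct.

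The genuine gap is in your ``core'' tight point. You propose the generator online on an interval containing $[t,t+m]$ with $x_t=\Vupper$ and $x_{t+i}=\Vupper-iV$ for $1\le i\le\delta$. Under the paper's standing assumption $\Clower<\Vupper<\Clower+V$ we have $\Vupper-V<\Clower$, so already $x_{t+1}=\Vupper-V$ violates the lower-bound constraint \eqref{eqn:p-lower-bound} whenever $y_{t+1}=1$. Thus your anchor point is infeasible in $P^D$, and every perturbation built from it inherits the same defect. The role of $\Vupper$ in $P^D$ is a \emph{shut-down} bound (via \eqref{eqn:p-ramp-down} with $y_t=0$), not a start level from which one ramps down in the stable region; the tight configurations for \eqref{eqn:rd-2-exp} that realize the $\Vupper y_t$ term have the unit shutting down at $t+1$ (so $x_t=\Vupper$, $x_{t+m}=0$), while the configurations that exercise the $V\sum_{i=1}^{\delta}(\cdot)$ and $S$-terms keep the unit on through $[t,t+m]$ with $x_t$ at a level like $\Clower+mV$ (not $\Vupper$) and $x_{t+m}=\Clower$. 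You need both families, and your sketch conflates them into a single infeasible hybrid. Relatedly, ``shut down at the period $q$'' is inconsistent with ``online on an interval containing $[t,t+m]$'' when $q=t+L\le t+m$.

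Two smaller points. First, $P^D$ does not contain \eqref{eqn:p-ramp-up}, so your reference to ``slack in \eqref{eqn:p-ramp-up}--\eqref{eqn:p-ramp-down}'' should drop the ramp-up part (this actually helps you: there is more freedom to vary $x$ upward). Second, in the statement as written $S_0=[t+L+1,t+m]_{\Z}$ is a fixed interval, not a free subset, so $d_i-i=1$ for every $i\in S\setminus\{t+m\}$ and there is no ``discrete choice of which period serves as a witness''; your item~(iii) in the facet construction, which varies $S$, does not apply here and those degrees of freedom must come from elsewhere (e.g., varying the start-up time of the online block containing $t$, or inserting on/off cycles outside $[t,t+m]$).
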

\begin{proof}
See Appendix B.4.
\end{proof}

We denote $\alpha_1 = \max \{n \in [1,T]_{\Z}: \Clower + nV \leq \Cupper\}$, $\alpha_2 = \max \{n \in [1,T]_{\Z}: \Vupper + nV \leq \Cupper\}$, and $\mathcal{Q} = \{0, (\Clower+nV)_{n=0}^{\alpha_1}, (\Vupper+nV)_{n=0}^{\alpha_2}, (\Cupper-nV)_{n=0}^{\alpha_1}\}$. Note here that $\alpha_2 \leq \alpha_1 \leq T$ because $\Vupper \geq \Clower$.

\begin{proposition} \label{prop:duc_ext_point}
For any extreme point $(\bar{x}, \bar{y}, \bar{u})$ of conv($P^U$) or conv($P^D$), $\bar{x}_t \in \mathcal{Q}$ for all $t \in [1,T]_{\Z}$.
\end{proposition}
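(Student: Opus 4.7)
The plan is to fix the integer-valued coordinates $(\bar{y},\bar{u})$ and reduce to a perturbation argument on the continuous variable $x$ alone.

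First, I would observe that since $\bar{y}\in\B^T$ and $\bar{u}\in\B^{T-1}$, if $(\bar{x},\bar{y},\bar{u})$ is an extreme point of conv($P^U$), then $\bar{x}$ is necessarily an extreme point of the fibre polytope
\[
R(\bar{y},\bar{u}):=\{x\in\R_+^T:(x,\bar{y},\bar{u})\in P^U\};
\]
any convex decomposition of $\bar{x}$ in $R(\bar{y},\bar{u})$ would lift to one of $(\bar{x},\bar{y},\bar{u})$ in conv($P^U$), and the same reduction applies to conv($P^D$). I would then decompose $[1,T]_{\Z}$ into maximal offline intervals (where $\bar{y}_t=0$ forces $\bar{x}_t=0\in\mathcal{Q}$) and maximal online intervals $[t^k_1,t^k_2]$. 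Restricted to a single online interval, the surviving constraints on $\bar{x}$ for $P^U$ are $\Clower\leq\bar{x}_t\leq\Cupper$, the ramp-up $\bar{x}_t-\bar{x}_{t-1}\leq V$ for $t\in(t^k_1,t^k_2]$, and the boundary condition $\bar{x}_{t^k_1}\leq\Vupper$ when $t^k_1\geq 2$ (arising from ramp-up out of $\bar{x}_{t^k_1-1}=0$); the $P^D$ case is symmetric, with ramp-down constraints and a $\Vupper$ anchor at $t^k_2$ when $t^k_2<T$.

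Next, assuming toward contradiction that $\bar{x}_{t^*}\notin\mathcal{Q}$ for some $t^*\in[t^k_1,t^k_2]$, I would let $[a,b]\subseteq[t^k_1,t^k_2]$ be the maximal sub-interval containing $t^*$ on which every interior ramp-up is tight, so that $\bar{x}_s=\bar{x}_a+(s-a)V$ for every $s\in[a,b]$ (strictly increasing since $V>0$). For the perturbation $\delta$ with $\delta_s=1$ for $s\in[a,b]$ and $\delta_s=0$ otherwise, a direct inspection shows that all ramp-up constraints are preserved under $\bar{x}\pm\varepsilon\delta$: the ramps inside $[a,b]$ stay tight because both coordinates shift uniformly, while the ramps at the transitions $a-1\to a$ and $b\to b+1$ are strictly slack by maximality (or absent when $a=t^k_1$ or $b=t^k_2$). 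Thus the only possible obstructions are absolute bound constraints within $[a,b]$, which by the monotonicity of the chain can only be active at the endpoints. If $\bar{x}_a=\Clower$ then $\bar{x}_{t^*}=\Clower+(t^*-a)V\in\mathcal{Q}$; if $a=t^k_1\geq 2$ and $\bar{x}_a=\Vupper$ then $\bar{x}_{t^*}=\Vupper+(t^*-a)V\in\mathcal{Q}$; if $\bar{x}_b=\Cupper$ then $\bar{x}_{t^*}=\Cupper-(b-t^*)V\in\mathcal{Q}$; in every case the translate lies in $\mathcal{Q}$ because the $\Cupper$ ceiling forces $t^*-a\leq\alpha_2$ and $b-t^*\leq\alpha_1$, contradicting $\bar{x}_{t^*}\notin\mathcal{Q}$. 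If none of these three anchors is active on $[a,b]$, then for small $\varepsilon>0$ both $\bar{x}+\varepsilon\delta$ and $\bar{x}-\varepsilon\delta$ remain in $R(\bar{y},\bar{u})$, contradicting extremality of $\bar{x}$. The $P^D$ case follows the identical pattern with a decreasing chain driven by ramp-down, the $\Vupper$ anchor now appearing at the right endpoint of the chain.

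The main obstacle will be handling the horizon-boundary cases cleanly: when $a=t^k_1=1$ for $P^U$ (respectively $b=t^k_2=T$ for $P^D$) the $\Vupper$ anchor at that endpoint disappears, leaving only $\Clower$ and $\Cupper$ as available anchors; one must verify that these still force $\bar{x}_{t^*}\in\mathcal{Q}$, which they do because blocking feasibility of $-\varepsilon\delta$ still requires $\bar{x}_a=\Clower$ and blocking $+\varepsilon\delta$ still requires $\bar{x}_b=\Cupper$. A secondary subtlety is that when both a $\Clower$ anchor at $a$ and a $\Cupper$ anchor at $b$ occur in the same chain, the length is forced to satisfy $(b-a)V=\Cupper-\Clower$, hence $b-a\leq\alpha_1$; this special case is consistent with $\mathcal{Q}$'s definition and does not require separate treatment.
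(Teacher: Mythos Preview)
Your argument is correct and complete in its essentials. Fixing the binary part $(\bar{y},\bar{u})$, restricting to the fibre polytope, decomposing into online intervals, and then running a perturbation along a maximal tight ramp-chain is exactly the right mechanism: the three possible anchors $\bar{x}_a=\Clower$, $\bar{x}_a=\Vupper$ (when $a=t^k_1\geq 2$), and $\bar{x}_b=\Cupper$ are precisely what force $\bar{x}_{t^*}$ into one of the three arithmetic progressions defining $\mathcal{Q}$, and you correctly observe that the index bounds $t^*-a\leq\alpha_1$ (for the $\Clower$ anchor), $t^*-a\leq\alpha_2$ (for the $\Vupper$ anchor), and $b-t^*\leq\alpha_1$ (for the $\Cupper$ anchor) all follow from $\Clower\leq\bar{x}_{t^*}\leq\Cupper$. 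Your handling of the horizon-boundary case $a=t^k_1=1$ is also right: the $\Vupper$ anchor simply drops out and the remaining two anchors still block the perturbation. One small wording slip: where you write ``the $\Cupper$ ceiling forces $t^*-a\leq\alpha_2$'' you should separately record $t^*-a\leq\alpha_1$ for the $\Clower$-anchor case, but this is a trivial addition.

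The paper itself does not give the argument in the body; it cites a more general statement from~\cite{pan2016polynomial1} and defers details to Appendix~B.5. Your fibre-plus-perturbation proof is the standard route to such extreme-point characterizations and is almost certainly equivalent in spirit to what the appendix contains; in any case it stands on its own.
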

\begin{proof}
A more general conclusion is provided in \cite{pan2016polynomial1}. The details are shown in Appendix B.5.
\end{proof}

\begin{theorem}
The convex hull description for the integrated minimum-up/-down time and ramping-up polytope is
\begin{eqnarray}
\mbox{conv}(P^U) = Q^U := \Bigl\{ (x, y, u) \in \R^{3T-1}:  \eqref{eqn:p-minup} - \eqref{eqn:p-lower-bound}, \eqref{eqn:u-pos}, \eqref{eqn:x_t-up-exp}, \eqref{eqn:ru-2-exp} \Bigr\}. \nonumber 
\end{eqnarray}
The convex hull description for the integrated minimum-up/-down time and ramping-down polytope is
\begin{eqnarray}
 conv(P^D) = Q^D := \Bigl\{ (x, y, u) \in \R^{3T-1}:  \eqref{eqn:p-minup} - \eqref{eqn:p-lower-bound}, \eqref{eqn:u-pos}, \eqref{eqn:x_t-down-exp}, \eqref{eqn:rd-2-exp} \Bigr\}. \nonumber
\end{eqnarray}
\end{theorem}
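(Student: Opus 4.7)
The plan is to mirror the strategy of Theorem~\ref{thm:intg_k=1}: for each of the two claims, verify that the candidate polytope is full-dimensional in $\R^{3T-1}$, characterize the extreme points of the primal polytope ($P^U$ or $P^D$), and then exhibit, at each extreme point, $3T-1$ linearly independent inequalities of the candidate polytope that are satisfied at equality. Since Propositions~\ref{prop:x_t-up-exp}--\ref{prop:rd-2-exp} already establish validity and facet-definingness of the new $x$-inequalities, producing such a tight system at every extreme point is enough to conclude $Q^U=\mbox{conv}(P^U)$ and $Q^D=\mbox{conv}(P^D)$.

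I would first dispose of the $(y,u)$-subspace. Since \eqref{eqn:p-minup}--\eqref{eqn:p-udef} and~\eqref{eqn:u-pos} are known from \cite{rajan2005minimum} to describe the convex hull of the minimum-up/-down time polytope, any extreme-point projection $(\bar{y},\bar{u})$ gives rise to $2T-1$ linearly independent tight inequalities involving only $y$ and $u$. Full-dimensionality of $Q^U$ and $Q^D$ can be verified by exhibiting one strict interior point, for instance, the generator kept online throughout at some interior generation level between $\Clower$ and $\Cupper$ consistent with the ramping bounds.

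The heart of the argument is the selection of $T$ tight $x$-inequalities per extreme point. By Proposition~\ref{prop:duc_ext_point} each $\bar{x}_t$ at an extreme point of $\mbox{conv}(P^U)$ lies in the finite set $\mathcal{Q}$, and within an online interval $[t_1^k,t_2^k]$ the only allowed trajectories consist of ``drops'' to $\Clower$ each followed by a ramp-up segment of slope $V$ starting from $\Clower$ or $\Vupper$, possibly saturating at $\Cupper$. Given such a trajectory, at each $t$ I would pick the parameters of~\eqref{eqn:x_t-up-exp} by setting $m$ to the number of periods back to the most recent drop (or to the start-up), and choosing $S$ to be exactly the indices strictly between that reset and $t$ at which $\bar{x}$ increases by $V$; direct substitution then collapses each term $y_i-\sum_j u_{i-j}$ to the value required to force equality. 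For pairs $(t,t-m)$ in which $\bar{x}_t-\bar{x}_{t-m}$ exactly attains the ramp bound, inequality~\eqref{eqn:ru-2-exp} with the matching $m$ and $S_0$ is tight; for indices with $\bar{x}_t\in\{0,\Clower\}$ the generation lower bound~\eqref{eqn:p-lower-bound} is already tight. The $\mbox{conv}(P^D)$ argument is symmetric, replacing ``most recent drop'' with ``next drop'' and using~\eqref{eqn:x_t-down-exp} and~\eqref{eqn:rd-2-exp}.

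The step I expect to be the main obstacle is the global linear-independence check for the selected $3T-1$ rows. Because the chosen $x$-inequalities carry nonzero coefficients on $y$ and $u$, the decomposition into a $(y,u)$-block and an $x$-block is not immediately block-triangular. My plan is to order the $x$-rows by $t$ and observe that, under the parameter choices above, each selected inequality is the unique row placing a $+1$ coefficient on $x_t$ while placing $0$ on $x_{t'}$ for $t'>t$; combined with invertibility of the $(y,u)$-block from~\cite{rajan2005minimum}, row reduction then yields full rank. The delicate part is carrying this bookkeeping cleanly through all possible online-interval configurations, in particular short intervals near the horizon endpoints where the $\min$ operators in~\eqref{eqn:x_t-up-exp}--\eqref{eqn:rd-2-exp} become active and alter the available parameter sets $(m,S)$.
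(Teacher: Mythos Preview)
Your proposal follows essentially the same strategy as the paper: characterize the extreme points via Proposition~\ref{prop:duc_ext_point}, then at each extreme point select $2T-1$ tight $(y,u)$-inequalities from the Rajan--Takriti description together with $T$ tight $x$-inequalities drawn from \eqref{eqn:p-lower-bound}, \eqref{eqn:x_t-up-exp}, \eqref{eqn:ru-2-exp} (resp.\ \eqref{eqn:x_t-down-exp}, \eqref{eqn:rd-2-exp}). The paper organizes the extreme-point enumeration through an explicit case build-up (first $\Cupper=\Clower+2V$, then $\Cupper=\Clower+kV$, then general) using forward/backward scenario trees, and its selection rule is stated as a clean three-way split on the value class of $\bar{x}_t$---pick \eqref{eqn:p-lower-bound} when $\bar{x}_t\in\{0,\Clower\}$, pick \eqref{eqn:x_t-up-exp} when $\bar{x}_t\in\{\Vupper+sV,\Cupper\}$, and pick \eqref{eqn:ru-2-exp} when $\bar{x}_t\in\{\Clower+sV,\Cupper-sV\}$---whereas your description of when to use \eqref{eqn:x_t-up-exp} versus \eqref{eqn:ru-2-exp} is phrased in terms of ``drops'' and ``pairs attaining the ramp bound'' and would benefit from being made into an explicit per-$t$ rule; conversely, your attention to the block-triangular linear-independence check is a detail the paper leaves implicit.
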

\begin{proof}
For $\mbox{conv}(P^U) = Q^U$, we first show it holds for the case in which $\Cupper=\Clower+2V$, then extends to the case in which $\Cupper=\Clower+kV$ with a general $k \geq 2$ (as it is easy to verify that the conclusion holds when $k=1$ due to Theorem \ref{thm:intg_k=1}), and finally extends to the case without restrictions on $\Cupper=\Clower+kV$.

For the case in which $\Cupper=\Clower+2V$, similar to the proof for Theorem \ref{thm:intg_k=1}, we first characterize the extreme points of conv($P^U$) by utilizing Proposition \ref{prop:duc_ext_point} and then pick the corresponding $3T-1$ inequalities in $Q^U$ to be tight on each extreme point. Based on the minimum-up/-down time restrictions, as shown in Figure \ref{fig:on-off-status}, we can observe that since time $1$, the generator stays online for certain time periods (e.g., from time $1$ to time $r$, denoted as ``type-$1$'' online interval),  shut down to be offline to certain time periods, start up to be online for certain time periods (e.g., from time $m$ to time $n$, denoted as ``type-$2$'' online interval), offline for certain time periods, online for certain time period, $\cdots$, and finally start up to be online until time $T$ (e.g., from time $s$ to time $T$, denoted as ``type-$3$'' online interval).

\begin{figure}[h!]
\begin{center}
\begin{tikzpicture}
          
\draw[->] (0,0) -- (14.5,0);          

\foreach \x in {0,0.5,1.5,2,2.5,3.5,4,4.5,5.5,6,6.5,
7.5,8,8.5,9.5,10,10.5,11.5,12,12.5,13.5,14}
    \draw[shift={(\x,0)}] (0pt,4pt) -- (0pt,-4pt);

\draw (-1, 1) node {online $\rightarrow$};
\draw (-1, 0) node {offline $\rightarrow$};

\draw (0, -0.4) node {$1$};
\draw (0,1) -- (2,1);
\draw (1, -0.2) node {$\cdots$};
\draw (2,1) -- (2,0);
\draw (2, -0.4) node {$r$};

\draw (1, -0.8) node {type-$1$};

\draw (3, -0.2) node {$\cdots$};

\draw (4, -0.4) node {$m$};
\draw (4,1) -- (4,0);
\draw (4,1) -- (6,1);
\draw (5, -0.2) node {$\cdots$};
\draw (6,1) -- (6,0);
\draw (6, -0.4) node {$n$};

\draw (5, -0.8) node {type-$2$};

\draw (7, -0.2) node {$\cdots$};

\draw (8,1) -- (8,0);
\draw (8,1) -- (10,1);
\draw (9, -0.2) node {$\cdots$};
\draw (10,1) -- (10,0);

\draw (9, -0.8) node {type-$2$};

\draw (11, -0.2) node {$\cdots$};

\draw (12, -0.4) node {$s$};
\draw (12,1) -- (12,0);
\draw (12,1) -- (14,1);
\draw (13, -0.2) node {$\cdots$};
\draw (14, -0.4) node {$T$};

\draw (13, -0.8) node {type-$3$};
    
\end{tikzpicture}
\caption{online/offline status} \label{fig:on-off-status}
\end{center}
\end{figure}
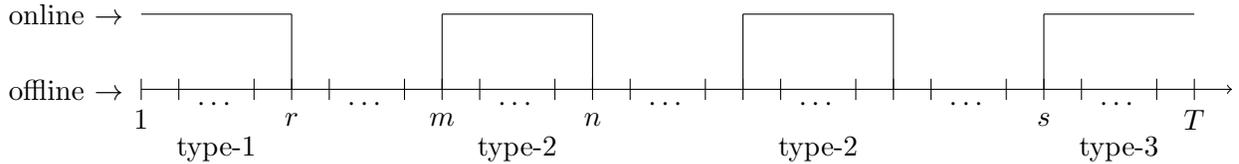

Now, we characterize the possible values of the $x$-part of each extreme point during each type of online interval, as $x_t=0$ for each $t$ in the offline intervals.

For ``type-$1$'' online interval (e.g., from time $1$ to time $r$), as shown in Figure \ref{fig:type-1}, we draw a backward scenario tree (with $r+1$ time periods/stages) with each scenario-tree node indicating the possible $x$-value at each time period, where the root node corresponds to time $r+1$ with $x$-value as $0$. In addition, for some $t$ with $x_t=\Clower+V$ then there must exist $t-1$ with $x_{t-1}=\Clower$ or $t+1$ with $x_{t+1}=\Cupper$ so that ramping-up constraints \eqref{eqn:p-ramp-up} can be tight for at least one of any two consecutive time periods. For instance, there are three possible values for $x_r$, i.e., $\Clower, \Clower+V,$ and $\Cupper$; if $x_r=\Clower$, then there are three possible values for $x_{r-1}$ (i.e., $\Clower, \Clower+V,$ and $\Cupper$); if $x_r=\Clower+V$, there is only one possible value for $x_{r-1}$ (i.e., $\Clower$) so that $x_r-x_{r-1}=V$, since $x_{r+1}-x_r\neq V$.  We can easily observe that all extreme points satisfy the possible $x$-values via the backward scenario tree in Figure \ref{fig:type-1} during the ``type-$1$'' online interval (which can be easily proved by a contradiction method that constructs two feasible points to make this point as the linear combination of these two constructed points).

\begin{figure}[h!]
\begin{center}
\begin{tikzpicture}
\draw (12,0) circle (4mm) node {$0$};
\draw [->] (11.6,0) -- (9.4, 3);
\draw [->] (11.6,0) -- (9.4, 0);
\draw [->] (11.6,0) -- (9.4, -3);
\draw (9,3) circle (4mm) node {$\Clower$};
\draw (9,0) circle (4mm) node {\tiny{$\Clower+V$}};
\draw (9,-3) circle (4mm) node {$\Cupper$};

\draw [->] (8.6,3) -- (6.4, 4.5);
\draw [->] (8.6,3) -- (6.4, 3);
\draw [->] (8.6,3) -- (6.4, 1.5);
\draw (6,4.5) circle (4mm) node {$\Clower$};
\draw (6,3) circle (4mm) node {\tiny{$\Clower+V$}};
\draw (6,1.5) circle (4mm) node {$\Cupper$};

\draw [->] (8.6,0) -- (6.4, 0);
\draw (6,0) circle (4mm) node {$\Clower$};

\draw [->] (8.6,-3) -- (6.4, -2);
\draw [->] (8.6,-3) -- (6.4, -3.5);
\draw (6,-2) circle (4mm) node {\tiny{$\Clower+V$}};
\draw (6,-3.5) circle (4mm) node {$\Cupper$};

\draw [->] (5.6,3) -- (3.4, 3);
\draw (3,3) circle (4mm) node {$\Clower$};

\draw [->] (5.6,-2) -- (3.4, -1.25);
\draw [->] (5.6,-2) -- (3.4, -2.75);
\draw (3,-1.25) circle (4mm) node {$\Clower$};
\draw (3,-2.75) circle (4mm) node {$\Cupper$};

\draw [line width=0.5pt, dashed]
      (9.5,3) to[out=110,in=5] (5.5,5) -- (5.5,1) to[out=-5,in=-110] (9.5,3) -- cycle;  
\draw (7.5, 4.5) node {$\mathcal{T}_1$};


\draw [line width=0.5pt, dashed]
      (9.5,-3) to[out=120,in=5] (5.5,-1.5) -- (5.5,-4) to[out=-5,in=-120] (9.5,-3) -- cycle;  
\draw (7.5, -2.2) node {$\mathcal{T}_2$};

\draw [->] (5.6,4.5) -- (5, 4.5);
\draw [->] (5.6,4.5) -- (5, 5);
\draw [->] (5.6,4.5) -- (5, 4);
\draw (3.9, 4.7) node {repeat};
\draw (3.9, 4.3) node {structure $\mathcal{T}_1$};

\draw [->] (5.6,0) -- (5, 0);
\draw [->] (5.6,0) -- (5, 0.5);
\draw [->] (5.6,0) -- (5, -0.5);
\draw (3.9, 0.2) node {repeat};
\draw (3.9, -0.2) node {structure $\mathcal{T}_1$};

\draw [->] (2.6,3) -- (2, 3);
\draw [->] (2.6,3) -- (2, 3.5);
\draw [->] (2.6,3) -- (2, 2.5);
\draw (.9, 3.2) node {repeat};
\draw (.9, 2.8) node {structure $\mathcal{T}_1$};

\draw [->] (2.6,-1.25) -- (2, -1.25);
\draw [->] (2.6,-1.25) -- (2, -0.75);
\draw [->] (2.6,-1.25) -- (2, -1.75);
\draw (.9, -1.05) node {repeat};
\draw (.9, -1.45) node {structure $\mathcal{T}_1$};

\draw [->] (5.6,-3.5) -- (5, -3.75);
\draw [->] (5.6,-3.5) -- (5, -3.25);
\draw (3.9, -3.3) node {repeat};
\draw (3.9, -3.7) node {structure $\mathcal{T}_2$};

\draw [->] (5.6,1.5) -- (5, 1.25);
\draw [->] (5.6,1.5) -- (5, 1.75);
\draw (3.9, 1.7) node {repeat};
\draw (3.9, 1.3) node {structure $\mathcal{T}_2$};

\draw [->] (2.6,-2.75) -- (2, -3);
\draw [->] (2.6,-2.75) -- (2, -2.5);
\draw (.9, -2.55) node {repeat};
\draw (.9, -2.95) node {structure $\mathcal{T}_2$};

\draw (12, -5) node {Time $r+1$};
\draw (9, -5) node {Time $r$};
\draw (6, -5) node {Time $r-1$};
\draw (3, -5) node {Time $r-2$};
\draw (1.5, -5) node {$\cdots$};
\draw (0, -5) node {Time $1$};

\end{tikzpicture}
\caption{type-$1$ online interval}  \label{fig:type-1}
\end{center}
\end{figure}

For ``type-$2$'' and ``type-$3$'' online intervals (e.g., from time $m$ to time $T$), we draw two forward scenario trees with each scenario-tree node indicating the possible $x$-value at each time period, where the root node corresponds to time $m-1$ with $x$-value as $0$. Meanwhile, the online/offline status of the generator is subject to the minimum-up/-down restrictions. In addition, for some $t$ with $x_t=\Clower+V$ then there must exist $t-1$ with $x_{t-1}=\Clower$ or $t+1$ with $x_{t+1}=\Cupper$ so that ramping-up constraints \eqref{eqn:p-ramp-up} can be tight for at least one of any two consecutive time periods. For instance, when starting up at time $t$, there are two possible values for $x_m$, i.e., $\Clower$ in Figure \ref{fig:type-2_3-1} and $\Vupper$ in Figure \ref{fig:type-2_3-2}; if $x_m=\Clower$, then there are three possible values for $x_{m+1}$, i.e., $0$ (subject to the minimum-up/-down time restrictions), $\Clower$, and $\Clower+V$; if $x_m=\Vupper$, then there are four possible values for $x_{m+1}$, i.e., $0$ (subject to the minimum-up/-down time restrictions), $\Clower$, $\Clower+V$, and $\Vupper+V$; if $x_m=\Vupper$ and $x_{m+1}=\Clower+V$, then there is only one possible value for $x_{m+2}$ (i.e., $\Cupper$) so that $x_{m+2}-x_{m+1}=V$, since $x_{m+1}-x_{m} \neq V$. We can easily observe that all extreme points satisfy the possible $x$-values during the ``type-$2$'' and ``type-$3$'' online intervals (which can be easily proved by a contradiction method that constructs two feasible points to make this point as the linear combination of these two constructed points).

\begin{figure}[h!]
\begin{center}
\begin{tikzpicture}
\draw (0,0) circle (4mm) node {$0$};
\draw [->] (0.4,0) -- (2.6, 0);
\draw (3,0) circle (4mm) node {$\Clower$};

\draw [->] (3.4,0) -- (5.6, 3);
\draw [->] (3.4,0) -- (5.6, 0);
\draw [->] (3.4,0) -- (5.6, -3);
\draw (6,3) circle (4mm) node {$0$};
\draw (6,0) circle (4mm) node {$\Clower$};
\draw (6,-3) circle (4mm) node {\tiny{$\Clower+V$}};

\draw [->] (6.4,3) -- (8.6, 4);
\draw [->] (6.4,3) -- (8.6, 3);
\draw [->] (6.4,3) -- (8.6, 2);
\draw (9,4) circle (4mm) node {$0$};
\draw (9,3) circle (4mm) node {$\Clower$};
\draw (9,2) circle (4mm) node {$\Vupper$};

\draw [->] (6.4,0) -- (8.6, 1);
\draw [->] (6.4,0) -- (8.6, 0);
\draw [->] (6.4,0) -- (8.6, -1);
\draw (9,1) circle (4mm) node {$0$};
\draw (9,0) circle (4mm) node {$\Clower$};
\draw (9,-1) circle (4mm) node {\tiny{$\Clower+V$}};

\draw [->] (6.4,-3) -- (8.6, -4);
\draw [->] (6.4,-3) -- (8.6, -3);
\draw [->] (6.4,-3) -- (8.6, -2);
\draw (9,-4) circle (4mm) node {$0$};
\draw (9,-3) circle (4mm) node {$\Clower$};
\draw (9,-2) circle (4mm) node {$\Cupper$};

\draw [line width=0.5pt, dashed]
      (5.5,3) to[out=70,in=180] (9.5,4.6) -- (9.5,1.5) to[out=180,in=-70] (5.5,3) -- cycle;  
\draw (7.5, 4) node {$\mathcal{T}_3$};

\draw [line width=0.5pt, dashed]
      (5.5,0) to[out=70,in=180] (9.5,1.6) -- (9.5,-1.5) to[out=180,in=-70] (5.5,0) -- cycle;  
\draw (7.5, 1) node {$\mathcal{T}_4$};

\draw [line width=0.5pt, dashed]
      (5.5,-3) to[out=70,in=180] (9.5,-1.4) -- (9.5,-4.5) to[out=180,in=-70] (5.5,-3) -- cycle;  
\draw (7.5, -2) node {$\mathcal{T}_5$};

\draw [->] (9.4,4) -- (10, 4);
\draw (10.5, 4) node {$\mathcal{T}_3$};

\draw [->] (9.4,3) -- (10, 3);
\draw (10.5, 3) node {$\mathcal{T}_4$};

\draw [->] (9.4,2) -- (10, 2);
\draw (10.5, 2) node {$\mathcal{T}_6$};

\draw [->] (9.4,1) -- (10, 1);
\draw (10.5, 1) node {$\mathcal{T}_3$};

\draw [->] (9.4,0) -- (10, 0);
\draw (10.5, 0) node {$\mathcal{T}_4$};

\draw [->] (9.4,-1) -- (10, -1);
\draw (10.5, -1) node {$\mathcal{T}_5$};

\draw [->] (9.4,-2) -- (10, -2);
\draw (10.5, -2) node {$\mathcal{T}_7$};

\draw [->] (9.4,-3) -- (10, -3);
\draw (10.5, -3) node {$\mathcal{T}_4$};

\draw [->] (9.4,-4) -- (10, -4);
\draw (10.5, -4) node {$\mathcal{T}_3$};

\draw (0, -5) node {Time $m-1$};
\draw (3, -5) node {Time $m$};
\draw (6, -5) node {Time $m+1$};
\draw (9, -5) node {Time $m+2$};
\draw (10.5, -5) node {$\cdots$};
\draw (12, -5) node {Time $T$};

\end{tikzpicture}
\caption{type-$2$ and type-$3$ online interval}  \label{fig:type-2_3-1}
\end{center}
\end{figure}

\begin{figure}[h!]
\begin{center}
\begin{tikzpicture}
\draw (0,0) circle (4mm) node {$0$};
\draw [->] (0.4,0) -- (2.6, 0);
\draw (3,0) circle (4mm) node {$\Vupper$};

\draw [->] (3.4,0) -- (5.6, 4.5);
\draw [->] (3.4,0) -- (5.6, 1.5);
\draw [->] (3.4,0) -- (5.6, -1);
\draw [->] (3.4,0) -- (5.6, -4.5);
\draw (6,4.5) circle (4mm) node {$0$};
\draw (6,1.5) circle (4mm) node {$\Clower$};
\draw (6,-1) circle (4mm) node {\tiny{$\Clower+V$}};
\draw (6,-4.5) circle (4mm) node {\tiny{$\Vupper+V$}};

\draw [->] (6.4,4.5) -- (8.6, 5.5);
\draw [->] (6.4,4.5) -- (8.6, 4.5);
\draw [->] (6.4,4.5) -- (8.6, 3.5);
\draw (9,5.5) circle (4mm) node {$0$};
\draw (9,4.5) circle (4mm) node {$\Clower$};
\draw (9,3.5) circle (4mm) node {$\Vupper$};

\draw [->] (6.4,1.5) -- (8.6, 2.5);
\draw [->] (6.4,1.5) -- (8.6, 1.5);
\draw [->] (6.4,1.5) -- (8.6, .5);
\draw (9,2.5) circle (4mm) node {$0$};
\draw (9,1.5) circle (4mm) node {$\Clower$};
\draw (9,.5) circle (4mm) node {\tiny{$\Clower+V$}};

\draw [->] (6.4,-1) -- (8.6, -1);
\draw (9,-1) circle (4mm) node {$\Cupper$};

\draw [->] (6.4,-4.5) -- (8.6, -2.5);
\draw [->] (6.4,-4.5) -- (8.6, -3.5);
\draw [->] (6.4,-4.5) -- (8.6, -4.5);
\draw [->] (6.4,-4.5) -- (8.6, -5.5);
\draw (9,-2.5) circle (4mm) node {$0$};
\draw (9,-3.5) circle (4mm) node {$\Clower$};
\draw (9,-4.5) circle (4mm) node {\tiny{$\Clower+V$}};
\draw (9,-5.5) circle (4mm) node {$\Cupper$};

\draw [->] (9.4,-1) -- (11.6, 0.5);
\draw [->] (9.4,-1) -- (11.6, -0.5);
\draw [->] (9.4,-1) -- (11.6, -1.5);
\draw [->] (9.4,-1) -- (11.6, -2.5);
\draw (12, 0.5) circle (4mm) node {$0$};
\draw (12, -0.5) circle (4mm) node {$\Clower$};
\draw (12, -1.5) circle (4mm) node {\tiny{$\Clower+V$}};
\draw (12, -2.5) circle (4mm) node {$\Cupper$};

\draw [line width=0.5pt, dashed]
      (2.5,0) to[out=80,in=180] (6.5,5) -- (6.5,-5) to[out=180,in=-80] (2.5,0) -- cycle;  
\draw (4, 2.5) node {$\mathcal{T}_6$};

\draw [line width=0.5pt, dashed]
      (8.5,-1) to[out=60,in=180] (12.5,1) -- (12.5,-3) to[out=180,in=-60] (8.5,-1) -- cycle;  
\draw (10, -0.2) node {$\mathcal{T}_7$};

\draw [->] (9.4,5.5) -- (10, 5.5);
\draw (10.5, 5.5) node {$\mathcal{T}_3$};

\draw [->] (9.4,4.5) -- (10, 4.5);
\draw (10.5, 4.5) node {$\mathcal{T}_4$};

\draw [->] (9.4,3.5) -- (10, 3.5);
\draw (10.5, 3.5) node {$\mathcal{T}_6$};

\draw [->] (9.4,2.5) -- (10, 2.5);
\draw (10.5, 2.5) node {$\mathcal{T}_3$};

\draw [->] (9.4,1.5) -- (10, 1.5);
\draw (10.5, 1.5) node {$\mathcal{T}_4$};

\draw [->] (9.4,0.5) -- (10, 0.7);
\draw (10.2, 0.7) node {$\mathcal{T}_5$};

\draw [->] (9.4,-2.5) -- (10, -2.7);
\draw (10.2, -2.7) node {$\mathcal{T}_3$};

\draw [->] (9.4,-3.5) -- (10, -3.5);
\draw (10.5, -3.5) node {$\mathcal{T}_4$};

\draw [->] (9.4,-4.5) -- (10, -4.5);
\draw (10.5, -4.5) node {$\mathcal{T}_5$};

\draw [->] (9.4,-5.5) -- (10, -5.5);
\draw (10.5, -5.5) node {$\mathcal{T}_7$};

\draw [->] (12.4,0.5) -- (13, 0.5);
\draw (13.5, 0.5) node {$\mathcal{T}_3$};

\draw [->] (12.4,-0.5) -- (13, -0.5);
\draw (13.5, -0.5) node {$\mathcal{T}_4$};

\draw [->] (12.4,-1.5) -- (13, -1.5);
\draw (13.5, -1.5) node {$\mathcal{T}_5$};

\draw [->] (12.4,-2.5) -- (13, -2.5);
\draw (13.5, -2.5) node {$\mathcal{T}_7$};

\draw (0, -6.2) node {Time $m-1$};
\draw (3, -6.2) node {Time $m$};
\draw (6, -6.2) node {Time $m+1$};
\draw (9, -6.2) node {Time $m+2$};
\draw (11.5, -6.2) node {$\cdots$};
\draw (14, -6.2) node {Time $T$};

\end{tikzpicture}
\caption{type-$2$ and type-$3$ online interval}  \label{fig:type-2_3-2}
\end{center}
\end{figure}

Now we can find the $3T-1$ tight inequalities corresponding to each extreme point as follows. For an extreme point $z=(x_1, x_2, \ldots, x_T; y_1, y_2, \ldots, y_T; u_2, \ldots, u_T)$, if $x_t \in \{0, \Clower\},$ pick $x_t \geq \Clower y_t$; else if $x_t \in \{\Vupper, \Vupper+V, \Cupper\},$ pick inequality \eqref{eqn:x_t-up-exp}; else $x_t \in \{ \Clower+V \},$ pick inequality \eqref{eqn:ru-2-exp}. Thus, here we pick $T$ tight inequalities based on the value of $x$ part of this extreme point $z$. Meanwhile, another $2T-1$ tight inequalities can be easily picked from \eqref{eqn:p-minup}-\eqref{eqn:p-lower-bound} and \eqref{eqn:u-pos} based on the values of the $y$ and $u$ parts of this extreme point $z$, since \eqref{eqn:p-minup}-\eqref{eqn:p-lower-bound} and \eqref{eqn:u-pos} already construct the convex hull of the minimum-up/-down time polytope \cite{rajan2005minimum}. Therefore, we show $\mbox{conv}(P^U) = Q^U$ holds for the case in which $k=2$.

The arguments above (including characterizing extreme points and picking the corresponding inequalities) can be easily extended to the case with a general $k \geq 2$. That is, to characterize the extreme points of conv($P$) following the conclusion in Proposition \ref{prop:duc_ext_point}, we continue to use the backward and forward scenario trees to show the possible values of the $x$ part of each extreme point. In addition, for some $t$ with $x_t=\Clower+sV$ ($s \in [1,k-1]_{\Z}$) then there must exist $t-1$ with $x_{t-1}=\Clower+(s-1)V$ or $t+1$ with $x_{t+1}=\Clower+(s+1)V$ so that ramping-up constraints \eqref{eqn:p-ramp-up} can be tight for at least one of any two consecutive time periods. Similarly, for some $t$ with $x_t=\Vupper+sV$ ($s \in [0,k-1]_{\Z}$) then there must exist $t-1$ with $x_{t-1}=\Vupper+(s-1)V$ or $0$ so that $x_t-x_{t-1}=V$. To pick the $3T-1$ tight inequalities from $Q^U$ for each extreme point of conv($P^U$), for an extreme point $(x_1, x_2, \ldots, x_T; y_1, y_2, \ldots, y_T; u_2, \ldots, u_T)$, if $x_t \in \{0, \Clower\},$ pick $x_t \geq \Clower y_t$; else if $x_t \in \{\Vupper+sV (s \in [0,k-1]_{\Z}), \Cupper\},$ pick inequality \eqref{eqn:x_t-up-exp}; else $x_t \in \{ \Clower+sV (s \in [1,k-1]_{\Z}) \},$ pick inequality \eqref{eqn:ru-2-exp}.

Furthermore, we can extend the argument to the case without restrictions on $\Cupper = \Clower+kV$ by letting $\gamma = \lfloor (\Cupper-\Clower)/V \rfloor$.

That is, following the conclusion in Proposition \ref{prop:duc_ext_point}, we continue to use the backward and forward scenario trees to characterize the extreme points of conv($P$). In addition, 
\begin{itemize}
\item[1)] for some $t$ with $x_t=\Clower+sV$ ($s \in [1,\gamma-1]_{\Z}$) then there must exist $t-1$ with $x_{t-1}=\Clower+(s-1)V$ or $t+1$ with $x_{t+1}=\Clower+(s+1)V$ so that ramping-up constraints \eqref{eqn:p-ramp-up} can be tight for at least one of any two consecutive time periods;
\item[2)] for some $t$ with $x_t=\Cupper-sV$ ($s \in [1,\gamma]_{\Z}$) then there must exist $\tilde{t}$ and $\hat{t}$ such that $\tilde{t} < t < \hat{t}$, $x_{\tilde{t}}=x_{\hat{t}}=\Cupper$, and $|x_{\bar{t}}-x_{\bar{t}+1}|=V$ for any $\tilde{t} \leq \bar{t} \leq \hat{t}-1$;
\item[3)] for some $t$ with $x_t=\Vupper+sV$ ($s \in [0,k-1]_{\Z}$) then there must exist $t-1$ with $x_{t-1}=\Vupper+(s-1)V$ or $0$ so that $x_t-x_{t-1}=V$.
\end{itemize}
To pick the $3T-1$ tight inequalities from $Q^U$ for each extreme point of conv($P^U$), for an extreme point $(x_1, x_2, \ldots, x_T; y_1, y_2, \ldots, y_T; u_2, \ldots, u_T)$, if $x_t \in \{0, \Clower\},$ pick $x_t \geq \Clower y_t$; else if $x_t \in \{\Vupper+sV (s \in [0,\gamma-1]_{\Z}), \Cupper\},$ pick inequality \eqref{eqn:x_t-up-exp}; else $x_t \in \{ \Clower+sV (s \in [1,\gamma]_{\Z}), \Cupper-sV (s \in [1,\gamma]_{\Z}) \},$ pick inequality \eqref{eqn:ru-2-exp}. Therefore, we show $\mbox{conv}(P^U) = Q^U$ holds in general. Due to the similarity, we omit the proof for the conclusion $\mbox{conv}(P^D) = Q^D$.
\end{proof}


\section{Conclusions} \label{sec:conclusion}
We presented the convex hull results of the unit commitment polytope under different settings. In particular, the convex hull descriptions for two special cases, i.e., $V=\Cupper-\Clower$ and $V=(\Cupper-\Clower)/2$ with general $T$ time periods are firstly provided for the integrated minimum-up/-down time and ramping polytope. Then the convex hull descriptions for the integrated minimum-up/-down time and ramping-up (or-down) polytope are provided under the most general setting.

\baselineskip=12pt
\bibliographystyle{plain}
\bibliography{uc_ramp}

\end{document}